\numberwithin{equation}{section}
\setlist[enumerate]{format=\normalfont}
\newcommand{\marginparstretch}{0.6}
\let\oldmarginpar\marginpar
\renewcommand\marginpar[1]{\-\oldmarginpar[\framebox{\setstretch{\marginparstretch}\begin{minipage}{\marginparwidth}{\raggedleft\tiny #1}\end{minipage}}]{\framebox{\setstretch{\marginparstretch}\begin{minipage}{\marginparwidth}{\raggedright\tiny #1}\end{minipage}}}}
\newtheorem{theorem}{Theorem}[section]
\newtheorem{corollary}[theorem]{Corollary}
\newtheorem{lemma}[theorem]{Lemma}
\newtheorem{proposition}[theorem]{Proposition}
\newtheorem{definition-proposition}[theorem]{Definition-Proposition}
\theoremstyle{definition}
\newtheorem{definition}[theorem]{Definition}
\newtheorem{remark}[theorem]{Remark}
\newtheorem{example}[theorem]{Example}
\newcommand{\mm}{{\mathfrak{m}}}
\newcommand{\nn}{{\mathfrak{n}}}
\newcommand{\pp}{{\mathfrak{p}}}
\newcommand{\CC}{\mathcal{C}}
\newcommand{\DD}{\mathcal{D}}
\newcommand{\ZZ}{\mathcal{Z}}
\newcommand{\Z}{\mathbb{Z}}
\newcommand{\Q}{\mathds{Q}}
\newcommand{\depth}{\operatorname{depth}\nolimits}
\newcommand{\Ext}{\operatorname{Ext}\nolimits}
\newcommand{\Tor}{\operatorname{Tor}\nolimits}
\newcommand{\Hom}{\operatorname{Hom}\nolimits}
\newcommand{\End}{\operatorname{End}\nolimits}
\newcommand{\gl}{\mathop{{\rm gl.dim}}\nolimits}
\newcommand{\injdim}{\mathop{{\rm inj.dim}}\nolimits}
\newcommand{\op}{\operatorname{op}\nolimits}
\newcommand{\RHom}{\mathbf{R}\strut\kern-.2em\operatorname{Hom}\nolimits}
\newcommand{\Spec}{\operatorname{Spec}\nolimits}
\newcommand{\Supp}{\operatorname{Supp}\nolimits}
\newcommand{\Zn}{\mathbb{Z}_n}
\newcommand{\order}[1]{ \left| #1 \right| }
\DeclareMathOperator{\moduleCategory}{\mathsf{mod}} \renewcommand{\mod}{\moduleCategory}
\DeclareMathOperator{\Mod}{\mathsf{Mod}}
\DeclareMathOperator{\proj}{\mathsf{proj}}
\DeclareMathOperator{\CM}{\mathsf{CM}}
\DeclareMathOperator{\Cl}{\mathsf{Cl}}
\renewcommand{\det}{\operatorname{\mathsf{det}}\nolimits}
\DeclareMathOperator{\refl}{\mathsf{ref}}
\DeclareMathOperator{\add}{\mathsf{add}}
\newcommand\mypound{\protect\scalebox{0.7}{\protect\raisebox{0.4ex}{\rm\#}}}
\newcommand{\NCov}[1]{#1\mypound G}
\newcommand\smypound{\protect\scalebox{0.5}{\protect\raisebox{0.4ex}{\rm\#}}}
\newcommand{\sNCov}[1]{#1\smypound G}
\DeclareMathOperator{\rk}{rank}
\DeclareMathOperator{\len}{length}
\DeclareMathOperator{\hei}{height}
\DeclareMathOperator{\NCCR}{\mathsf{NCCR}}
\DeclareMathOperator{\GM}{\mathsf{GM}}
\begin{document}
\title[Gorenstein modifications and $\Q$-Gorenstein rings]{Gorenstein modifications and $\Q$-Gorenstein rings}
\author{Hailong Dao, Osamu Iyama, Ryo Takahashi and Michael Wemyss}
%\date{\today}
\address{H. Dao: Department of Mathematics, University of Kansas, Lawrence, KS 66045-7523, USA}
\email{hdao@ku.edu}
\urladdr{http://www.math.ku.edu/~hdao/}

\address{O. Iyama: Graduate School of Mathematics, Nagoya University, Furocho, Chikusaku, Nagoya 464-8602, Japan}
\email{iyama@math.nagoya-u.ac.jp}
\urladdr{http://www.math.nagoya-u.ac.jp/~iyama/}

\address{R. Takahashi: Graduate School of Mathematics, Nagoya University, Furocho, Chikusaku, Nagoya 464-8602, Japan}
\email{takahashi@math.nagoya-u.ac.jp}
\urladdr{http://www.math.nagoya-u.ac.jp/~takahashi/}

\address{Michael Wemyss: School of Mathematics and Statistics, University of Glasgow, 15 University Gardens, Glasgow, G12 8QW,
UK.}
\email{michael.wemyss@glasgow.ac.uk}
\urladdr{http://www.maths.gla.ac.uk/~mwemyss/}

\begin{abstract}
Let $R$ be a Cohen--Macaulay normal domain with a canonical module $\omega_R$.  It is proved that if $R$ admits a noncommutative crepant resolution (NCCR), then necessarily it is $\mathds{Q}$-Gorenstein.  Writing $S$ for a Zariski local canonical cover of $R$, then a tight relationship between the existence of noncommutative (crepant) resolutions on $R$ and $S$ is given.  A weaker notion of Gorenstein modification is developed, and a similar tight relationship is given.  There are three applications: non-Gorenstein quotient singularities by connected reductive groups cannot admit an NCCR,  the centre of any NCCR is log-terminal, and the Auslander--Esnault classification of two-dimensional CM-finite algebras can be deduced from Buchweitz--Greuel--Schreyer.
\end{abstract}
\subjclass[2010]{13C14, 14E20, 14A22, 16E10.
%13C14 Cohen-Macaulay modules, 
%14E20 Coverings 
%14A22 Noncommutative algebraic geometry, 
%16E10 Homological dimension
}
\maketitle

%\tableofcontents

\section{Introduction}

The theory of cyclic covers of varieties is now well established in algebraic geometry, particularly in birational geometry and the minimal model programme \cite{c3f,K,KM}.  Under mild assumptions, cyclic covers reduce problems involving Cohen--Macaulay rings to problems involving Gorenstein rings, which are typically much easier.  As such, a cyclic cover of a variety often improves its singularities, and it is a classical problem both in birational geometry and in commutative algebra to study this relationship.

On the other hand, noncommutative approaches to problems in birational geometry have recently been developed \cite{VdB1d, VdBNCCR, IW1, HomMMP} through noncommutative resolutions and their variants.  This paper considers how these structures behave under taking cyclic covers, and we give characterisations of how each of the various noncommutative structures can be detected and constructed on the cover.  Along the way, we give a general algebraic obstruction to the existence of an NCCR, through the $\mathds{Q}$-Gorenstein property.

\subsection{Summary of Results}
Throughout this introduction, let $R$ be a commutative noetherian ring. It will not be assumed that $R$ is local, normal or Cohen--Macaulay, unless stated otherwise. We write  $\CM R$ for the category of Cohen--Macaulay (CM) $R$-modules, that is, finitely generated $R$-modules $X$ satisfying $\depth_{R_{\pp}}(X\otimes_RR_{\pp})=\dim R_{\pp}$ for all $\pp\in\Supp_RX$, and write $\refl R$ for the category of finitely generated reflexive $R$-modules. 

%\item An $R$-algebra $A$ is called an \emph{$R$-order} if $A\in\CM R$. If moreover $\gl A=\dim R$ holds, then $A$ is called \emph{non-singular}.
For $M\in\refl R$, we say that $M$ gives a \emph{noncommutative crepant resolution} (NCCR) \emph{of $R$} if $A\colonequals \End_R(M)$ satisfies $A\in\CM R$ and $\gl (A\otimes_RR_{\pp})=\dim R_{\pp}$ for all $\pp\in\Supp_RA$.  These two conditions are independent, in contrast to commutative rings, and we write $\NCCR R$ for the set of additive equivalence classes of $R$-modules giving NCCRs of $R$, where  $R$-modules $X$ and $Y$ are \emph{additively equivalent} if $\add X=\add Y$.

When $R$ is a Cohen--Macaulay normal domain with a fixed canonical module $\omega_R$, recall that $R$ is \emph{$\Q$-Gorenstein of index $n>0$} if $\omega_R^n$ is a locally free $R$-module, where $\omega_R^i\colonequals (\omega_R\otimes\cdots\otimes\omega_R)^{**}$ ($i$ times), and $n$ is the smallest number with this property.  We will also consider the stronger property that $R$ is \emph{$\mathds{Q}$-Calabi--Yau} ($\Q$-CY) \emph{of index $n>0$}, namely when $\omega_R^n\cong R$ and $n$ is the smallest number with this property. This is equivalent to $\mathds{Q}$-Gorenstein of index $n$ when $R$ is local. In contrast to $\Q$-Gorenstein, $\Q$-CY depends on a choice of $\omega_R$.

Setting notation for our first result, let $\Zn:=\Z/n\Z$ and $S$ be a $\Zn$-graded ring.  Then an $S$-module $X$ is \emph{$\Zn$-gradable} if there exists a $\Zn$-graded $S$-module $Y$ which is isomorphic to $X$, as an $S$-module. We write $\NCCR_{\Zn}\!S$ for the subset of $\NCCR\!S$ consisting of additive equivalence classes of $\Zn$-gradable $S$-modules.  Our first main result is then the following, where we refer the reader to Theorems~\ref{prop 1} and \ref{main} for the detailed statements.

\begin{theorem}[Theorems~\ref{prop 1} and \ref{main}]\label{main NCCR}
Let $R$ be a CM normal domain, admitting  a canonical module, and containing a field of characteristic zero.
\begin{enumerate}
\item If $R$ has an NCCR, then the following conditions hold.
\begin{enumerate}
\item $R$ is $\Q$-Gorenstein.
\item Every Zariski local canonical cover of $R$ is Gorenstein and has an NCCR given by a gradable module.
\end{enumerate}
\item Conversely, if the following conditions are satisfied, then $R$ has an NCCR.
\begin{enumerate}
\item $R$ is $\Q$-CY.
\item A canonical cover $S$ of $R$ is Gorenstein and has an NCCR given by a gradable module.
\end{enumerate}
In this case, for the index $n$ of the cover, there is a bijection 
\[
\NCCR R\simeq\NCCR_{\Zn}\!S.
\]
\end{enumerate}
\end{theorem}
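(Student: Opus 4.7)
The plan is to organize the proof along the four assertions, handling (1a), (1b), (2) and the final bijection in turn, and then to localize most verifications at codimension one where $R$ is regular and $A=\End_R(M)$ becomes Morita equivalent to $R$.

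For (1a), I would start from an NCCR $M$ and analyze the canonical bimodule $\omega_A=\Hom_R(A,\omega_R)$, where $A=\End_R(M)$. Since $A\in\CM R$ and $M$ is reflexive, there is a natural isomorphism $\omega_A\cong A\otimes_R\omega_R$ as $A$-bimodules, because reflexivity of $A$ over $R$ forces $\Hom_R(A,\omega_R)\cong A\otimes_R\omega_R$ in codimension one and both sides are reflexive $R$-modules. The NCCR hypothesis—finite global dimension on $\Supp A$—makes $\omega_A$ an invertible $A$-bimodule (it is the Serre/Nakayama bimodule of the homologically smooth order $A$). Localizing at a height one prime $\pp$, the ring $A_{\pp}$ is Morita equivalent to the regular ring $R_{\pp}$, so the invertibility of $\omega_A$ descends to $\omega_R\in\Pic(R_{\pp})$ globally inducing a torsion class in the class group $\Cl(R)$. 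This forces $\omega_R^n$ to be locally free for some $n>0$, i.e.\ $R$ is $\Q$-Gorenstein.

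For (1b), once $R$ is $\Q$-Gorenstein of index $n$, the Zariski local canonical cover $S=\bigoplus_{i=0}^{n-1}\omega_R^i$ (with multiplication given by the chosen isomorphism $\omega_R^n\cong R$) is a faithfully flat $\Zn$-graded $R$-algebra with $\omega_S\cong S$, so $S$ is Gorenstein. I would then show that $N\colonequals M\otimes_R S$, which is visibly $\Zn$-gradable, provides an NCCR of $S$: the identification $\End_S(N)\cong \End_R(M)\otimes_R S$ follows from reflexivity (both sides agree at height one and are reflexive $S$-modules), and the Cohen--Macaulay and global dimension conditions transfer from $R$ to $S$ using flatness of $R\to S$ together with the fact that on $\Spec S\to\Spec R$ the cover is étale over the regular locus of $R$.

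For (2), assume $R$ is $\Q$-CY and $S$ has an NCCR $N$ that is $\Zn$-gradable. The key structural input is the Cohen--Montgomery/smash-product correspondence between $\Zn$-graded $S$-modules and $R$-modules via the adjoint pair $(-)\otimes_R S$ and $(-)^{\Zn}$; the $\Q$-CY hypothesis is what makes this a genuine symmetric correspondence rather than merely a skew one. Taking $M\colonequals N_0$ (the degree zero piece of a chosen $\Zn$-grading on $N$), one obtains $M\otimes_R S\cong N$ and $\End_R(M)\cong \End_S(N)^{\Zn}$. Descent of Cohen--Macaulayness and finite global dimension from $S$ to $R$ then yields an NCCR of $R$. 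The bijection $\NCCR R\simeq\NCCR_{\Zn}\!S$ is the resulting mutual inverse pair $M\mapsto M\otimes_R S$ and $N\mapsto N_0$, with independence from the choice of $\Zn$-grading handled by observing that two gradings differ by a shift in the degree-zero-additive-closure sense.

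The hardest step is almost certainly (1a): extracting a geometric/divisorial finiteness statement (torsion of $[\omega_R]$ in $\Cl(R)$) from the purely homological hypothesis of finite global dimension of a noncommutative ring. The delicate point is that $A$ need not satisfy $\omega_A\cong A$ as a bimodule—only up to an invertible bimodule twist—and one must carefully identify this twist with $\omega_R$ via the natural map $\Pic(A)\to\Cl(R)$ obtained by restricting to codimension one. The remaining parts are more formal: they combine the standard machinery of canonical covers and $\Zn$-graded module categories with reflexive descent along the faithfully flat finite extension $R\to S$.
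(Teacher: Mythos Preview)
Your proposal has a genuine gap: the claim that $R\to S$ is faithfully flat is false. The canonical cover $S=\bigoplus_{i=0}^{n-1}\omega_R^i$ is built from reflexive rank-one modules $\omega_R^i$, which are projective (hence flat) only when $\omega_R$ already lies in $\Pic(R)$. In general $S$ is merely a finite reflexive $R$-module, and whether $S\in\CM R$ is exactly the question of whether $S$ is Gorenstein (Proposition~\ref{gorensteiness}\eqref{gorensteiness 1}); you cannot assume it. Consequently, your identification $\End_S(N)\cong\End_R(M)\otimes_RS$ and the flat-base-change transfer of CM-ness and finite global dimension are unjustified. The paper avoids flatness entirely: it shows $\End_R(N)\cong\NCov{\End_S(N)}$ is the Gabriel cover (Proposition~\ref{E and Sn}), and then compares global and injective dimensions via Propositions~\ref{global 1} and~\ref{gl.dim of *}, using only that $|G|^{-1}\in R$.

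Your sketch of (1a) also does not close. Even granting that $\omega_A$ is an invertible $A$-bimodule, you have not explained why this forces $[\omega_R]$ to be \emph{torsion} in $\Cl(R)$; invertibility alone gives no finiteness, and the proposed map $\Pic(A)\to\Cl(R)$ is left undefined. The paper's argument is sharper and more elementary: an NCCR is in particular a GM, so by Proposition~\ref{prop 0 text} one has $\add M=\add\upnu(M)$, whence after completing at a maximal ideal $M\cong(\omega_R\otimes_RM)^{**}$. Taking determinants (Proposition~\ref{det}) yields $(\rk M)[\omega_R]=0$ in $\Cl(R)$, so $R$ is $\Q$-Gorenstein. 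The Gorensteinness of $S$ in (1b) is then obtained by showing each $\omega_R^i$ is CM, as a summand of $(\End_R(M)\otimes_R\omega_R^i)^{**}\in\CM R$; here the characteristic-zero hypothesis enters through the splitting of $R$ off $\End_R(M)$, which needs $(\rk M)^{-1}\in R$.
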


A main ingredient used in the proof, which controls the gap between a $\Q$-Gorenstein ring and its canonical cover, is given by a certain noncommutative ring which we call a \emph{Gabriel cover} to avoid any confusion.
This well known concept in representation theory is usually called as a Galois covering \cite{G} or a smash product \cite{CM}; see also \cite{As,DGL}.  In Sections~\ref{Gabriel} and \ref{Gabriel2}, we include a brief introduction to Gabriel covers and explain why they are a standard tool to deal with graded modules.

The above theorem settles the NCCR situation, but there are other important noncommutative variants of interest.  In this paper our primary interest lies in what we call \emph{Gorenstein modifications} (GM).  For $M\in\refl R$ with $A:=\End_R(M)$, we say that $A$ is a Gorenstein modification of $R$ if $A\in\CM R$ and $\injdim_{A\otimes_RR_\pp}(A\otimes_RR_{\pp})=\dim R_{\pp}$ for all $\pp\in\Supp_RA$. We remark that the inequality $\ge$ always holds \cite[Corollary 3.5(4)]{GN2}. Also, when $R$ is CM and admits a canonical module, GMs appear in the literature in many different guises; see \S\ref{Sect 3.2} for an overview. However the setting in this paper is often more general, and at times requires our more general definition.  Recall also that for $M\in\refl R$ with $A:=\End_R(M)$, we say that $M$ gives an \emph{noncommutative resolution} (NCR) \emph{of $R$} if $\gl (A\otimes_RR_{\pp})<\infty$ for all $\pp\in\Spec R$ (cf. \cite{DITV}).

Now assume that $R$ is a normal domain with a canonical module $\omega_R$. Then the category $\refl R$ admits two relevant dualities $(-)^*\colonequals \Hom_R(-,R)\colon\refl R\to\refl R$ and $(-)^{\vee}\colonequals \Hom_R(-,\omega_R)\colon\refl R\to\refl R$. A key role in this paper is played by the equivalence
\[
\upnu\colonequals (-)^{\vee}\circ(-)^*=(\omega_R\otimes_R-)^{**}\colon\refl R\to\refl R
\]
which is called the \emph{Nakayama functor}. It gives the Auslander-Reiten translation in dimension two \cite{A2,Y}, and the higher Auslander-Reiten translation in any dimension \cite{I}. The first part of the following result is an analog of results in \cite{AS,IS}, and the second part generalizes \cite[Theorem 2.3.2]{I}.

\begin{proposition}[Proposition \ref{prop 0 text}]\label{prop 0}
Let $R$ be a CM normal domain with a canonical module.
Assume that $M\in\refl R$ gives a modification of $R$. Then $M$ gives a GM of $R$ if and only if $\add M=\add \upnu(M)$ holds. In this case, $\upnu$ gives an autoequivalence on $\add M$.
\end{proposition}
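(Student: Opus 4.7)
The plan is to reduce the GM condition to a statement about projectivity of the canonical bimodule $\omega_A\colonequals \Hom_R(A,\omega_R)$ of $A=\End_R(M)$, and then to translate this projectivity into a condition on $\add M$ via the Nakayama functor. Since $A\in\CM R$ by the modification hypothesis, standard theory of CM orders over a CM normal base with canonical module shows that $M$ yields a GM of $R$ if and only if $\omega_A$ is projective as both a left and a right $A$-module.

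The central step is to establish a natural $A$-bimodule isomorphism
\[
\omega_A=\Hom_R(\End_R(M),\omega_R)\cong\Hom_R(M,\upnu(M)),
\]
obtained by elementary manipulations with $\Hom$ and tensor, using the reflexivity of $M$ together with the formula $\upnu(M)=(\omega_R\otimes_RM)^{**}$. Since $\upnu$ is an autoequivalence of $\refl R$, this simultaneously yields the companion identification $\omega_A\cong\Hom_R(\upnu^{-1}(M),M)$ of $A$-bimodules by applying $\upnu^{-1}$ to Hom-sets.

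Now feed these into the projectivization equivalences $\Hom_R(M,-)\colon\add M\xrightarrow{\sim}\proj A$ and $\Hom_R(-,M)\colon(\add M)^{op}\xrightarrow{\sim}\proj A^{op}$, both of which send $M$ to $A$. This gives that $\omega_A$ is projective as a right $A$-module if and only if $\upnu(M)\in\add M$, and projective as a left $A$-module if and only if $\upnu^{-1}(M)\in\add M$. The former inclusion yields $\add\upnu(M)\subseteq\add M$, and applying the equivalence $\upnu$ to the latter yields $\add M\subseteq\add\upnu(M)$; together they are exactly the condition $\add M=\add\upnu(M)$, and conversely this condition supplies both inclusions. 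Under it, $\upnu$ and $\upnu^{-1}$ each carry $\add M$ into itself, so $\upnu$ restricts to an autoequivalence of $\add M$, inherited from the ambient autoequivalence of $\refl R$.

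I expect the main obstacle to be the careful verification of the bimodule identification $\omega_A\cong\Hom_R(M,\upnu(M))$, particularly matching the left and right $\End_R(M)$-actions through the various reflexive dualities and the Nakayama twist. Once this identification is secured, the remainder of the argument is a clean application of projectivization of orders and of the autoequivalence property of $\upnu$ on $\refl R$.
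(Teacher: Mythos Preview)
Your proposal is correct and follows essentially the same route as the paper: both identify $\omega_A\cong\Hom_R(M,\upnu(M))$ (the paper packages this as Lemma~\ref{AR type}) and then translate projectivity of $\omega_A$ into $\add M=\add\upnu(M)$ via the reflexive Morita-type equivalence of Proposition~\ref{ref eq}. The only cosmetic difference is that the paper invokes the full equivalence $\Hom_R(M,-)\colon\refl R\to\refl\Lambda$ together with the one-sided criterion $\add_\Lambda\Lambda=\add_\Lambda\Lambda^\vee$, whereas you handle left and right projectivity separately via the companion identification $\omega_A\cong\Hom_R(\upnu^{-1}(M),M)$.
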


We write $\GM R$ for the set of additive equivalence classes of $R$-modules giving GMs of $R$.  When $S$ is a $\Zn$-graded ring, as before we define the subset $\GM_{\Zn}\!S$ of $\GM\!S$ in the obvious way.  Our next  main result is the following, where again we refer the reader to Theorems~\ref{prop 1} and \ref{main} for the detailed statements.

\begin{theorem}[Theorems~\ref{prop 1} and \ref{main}]\label{main GM}
Let $R$ be a CM normal domain, admitting  a canonical module, and containing a field of characteristic zero.
Then $R$ has a GM if and only if the following conditions hold.
\begin{enumerate}
\item $R$ is $\Q$-Gorenstein.
\item Every Zariski local canonical cover of $R$ is Gorenstein.
\end{enumerate}
If $R$ is $\Q$-CY of index $n$, then there is a bijection $\GM R\simeq\GM_{\Zn}\!S$ for each canonical cover $S$ of $R$.
\end{theorem}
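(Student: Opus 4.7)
The plan is to parallel the proof of Theorem~\ref{main NCCR} (whose detailed form appears in Theorems~\ref{prop 1} and~\ref{main}), using Proposition~\ref{prop 0} in place of the global-dimension criterion employed in the NCCR case. For necessity, suppose $M\in\refl R$ gives a GM of $R$ with $A=\End_R(M)$. Proposition~\ref{prop 0} asserts that the Nakayama functor $\upnu=(\omega_R\otimes_R-)^{**}$ restricts to an autoequivalence of $\add M$; hence $\upnu$ permutes the finitely many isomorphism classes of indecomposable summands of $M$, and some power $\upnu^n$ acts as the identity on $\add M$. After ensuring $R\in\add M$ (by replacing $M$ with $M\oplus R$ if necessary and verifying the GM conditions persist), applying $\upnu^n$ to $R$ gives $\omega_R^n\cong\upnu^n(R)\cong R$ locally at each $\pp\in\Supp_R A$, yielding condition~(1). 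For condition~(2), the $\upnu$-periodic structure on $\add M$ provides a $\Zn$-equivariant lift of $M$ along $R\to S$ for every Zariski local canonical cover $S$; the endomorphism ring of the lift is a $\Zn$-graded GM on $S$, and combined with $\omega_S\cong S$ this forces $S$ itself to be Gorenstein.

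For sufficiency, it suffices to treat the $\Q$-CY case (the general $\Q$-Gorenstein case then follows by localizing at maximal ideals). When $R$ is $\Q$-CY of index $n$, the canonical cover $S=\bigoplus_{i=0}^{n-1}\omega_R^i$ exists globally and is Gorenstein by hypothesis, so $S$ tautologically lies in $\GM_{\Zn}\!S$. The Gabriel cover formalism of Sections~\ref{Gabriel}--\ref{Gabriel2} then transports this to a GM of $R$: $\End_R(S)$ is naturally identified with a smash product of $S$ by $\Zn$, and the Cohen--Macaulay and finite injective dimension conditions are preserved under the cover, using that $\Zn$ acts semisimply in characteristic zero and that $R\subset S$ is \'etale in codimension one. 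The claimed bijection $\GM R\simeq\GM_{\Zn}\!S$ follows by running the same correspondence for arbitrary modules: $\Zn$-gradable $S$-modules correspond to modules over the smash product, hence to a distinguished class of $R$-modules, and the correspondence preserves all homological invariants in play.

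The main obstacle is controlling finite injective dimension across the cover, since $\injdim$ is more delicate than the global dimension used for NCCRs. The general inequality $\injdim_{A\otimes_RR_\pp}(A\otimes_RR_\pp)\ge\dim R_\pp$ recorded in the introduction means equality must be re-verified both in the lift from $R$ to $S$ and in the descent from $S$ to $R$. A secondary subtlety in the necessity direction is arranging $R\in\add M$ and then showing the $\Zn$-equivariant lift constructed from $\upnu$ genuinely lies in $\GM_{\Zn}\!S$; this is where the interaction between the reflexive-module framework, the Nakayama functor, and the smash product requires the most care.
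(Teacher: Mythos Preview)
Your outline has a genuine gap in the necessity direction. You propose to force $R\in\add M$ by replacing $M$ with $M\oplus R$ and ``verifying the GM conditions persist'', but this step fails in general: as an $R$-module $\End_R(M\oplus R)$ contains $M$ and $M^*$ as direct summands, and a GM $M$ need not itself be CM (only $\End_R(M)$ is required to be CM). So $\End_R(M\oplus R)$ need not lie in $\CM R$, and the replacement can destroy the modification property before you ever get to injective dimension. Even granting $R\in\add M$, the permutation argument for $\upnu^n(R)\cong R$ needs Krull--Schmidt, which you do not arrange; the paper handles this by first passing to the completion of a localisation.

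The paper avoids both of these issues by a different device. After reducing to the complete local case and taking $M$ basic, Proposition~\ref{prop 0} gives $M\cong\upnu(M)=(\omega_R\otimes_RM)^{**}$ on the nose, and then a determinant computation (Proposition~\ref{det}) yields $[\det M]=[\det M]+(\rk M)[\omega_R]$ in $\Cl(R)$, so $[\omega_R]$ is torsion without any need for $R\in\add M$. For condition~(2), the paper does not lift $M$ to $S$ and argue backwards from $\omega_S\cong S$; instead it shows directly that each $\omega_U^i$ is CM over $U$, by using that $U$ splits off $\End_U(M)$ (here the invertibility of $\rk M$, automatic in characteristic zero, is used) and that $(\End_U(M)\otimes_U\omega_U^i)^{**}\cong\Hom_U(M,(\omega_U^i\otimes_UM)^{**})\in\CM U$ since $\upnu$ preserves $\add M$. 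This gives $S=\bigoplus_i\omega_U^i\in\CM U$, which is equivalent to $S$ being Gorenstein by Proposition~\ref{gorensteiness}. Your route to condition~(2) is too vague to be assessed, and as written risks circularity: you would need $S$ Gorenstein (equivalently $S$ CM over $R$) before you can even speak of a GM of $S$. For sufficiency and the bijection your sketch is broadly compatible with Theorems~\ref{from S to R} and~\ref{main}, though the paper's proof of sufficiency is shorter: it again invokes Proposition~\ref{prop 0} (since $\upnu(S)\cong S$) rather than tracking injective dimension through the Gabriel cover.
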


The field restriction is not strictly needed; see Theorem \ref{prop 1} for a general result for the equivalence, and  Theorem \ref{main} for the most general result for the bijection.

It turns out that Gorenstein modifications induce various categorical equivalences. A typical case is given by the following, which follows from a more general result proved in Theorem \ref{main equivalence text}.

\begin{theorem}[Theorem \ref{main equivalence text}]\label{intro equiv}
Let $R$ be a CM local normal domain with a canonical module, which is $\Q$-Gorenstein of index $n$. Let $S$ be a canonical cover of $R$. Then there is an equivalence of categories
\[
\CM^{\Zn}\!S
%\simeq\CM\End_R(S)\simeq\ZZ(R,S)
\simeq\{X\in\CM R\mid\forall i\in\Z\  (\omega_R^i\otimes_RX)^{**}\in\CM R\}.
\]
%sending $X\in\CM^{\Z_n}\!S$ to $X_0\in\CM R$.
\end{theorem}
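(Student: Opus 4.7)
The plan is to exhibit explicit quasi-inverse functors. Writing $\mathcal{E}$ for the right-hand side, recall that as an $R$-module the canonical cover decomposes as $S=\bigoplus_{i=0}^{n-1}\omega_R^i$, so a $\Zn$-graded $S$-module takes the form $X=\bigoplus_{i\in\Zn}X_i$ with structure maps $\omega_R^i\otimes_RX_j\to X_{i+j}$ supplied by the $S$-action. I would define
\[
F\colon\CM^{\Zn}\!S\longrightarrow\mathcal{E},\quad X\longmapsto X_0,\qquad G\colon\mathcal{E}\longrightarrow\CM^{\Zn}\!S,\quad Y\longmapsto\bigoplus_{i\in\Zn}(\omega_R^i\otimes_RY)^{**},
\]
where the $S$-action on $G(Y)$ is assembled from the reflexivised multiplication maps together with the fixed isomorphism $\omega_R^n\cong R$ witnessing that $R$ is $\Q$-CY of index $n$.

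The crucial step is to prove that $F$ lands in $\mathcal{E}$. Since $S$ is module-finite over the CM local ring $R$, any $X\in\CM^{\Zn}\!S$ is CM as an $R$-module, and hence so is each graded piece $X_i$. The core identification is
\[
(\omega_R^i\otimes_RX_0)^{**}\cong X_i\qquad\text{in }\refl R.
\]
Both sides are reflexive, so by normality of $R$ it suffices to verify the isomorphism at height-one primes $\pp$. There $R_\pp$ is a DVR over which $\omega_R$ is invertible, and $S_\pp$ is (a twist of) the group algebra $R_\pp[\Zn]$ acting on $X_\pp=\bigoplus X_{i,\pp}$; the generator of degree $1$ acts as an automorphism because its $n$th power is a unit, so the multiplication map $\omega_R^i\otimes_RX_0\to X_i$ is tautologically an isomorphism after localising. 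Reflexivity then propagates this to a global isomorphism, which simultaneously shows $X_0\in\CM R$ and that each $(\omega_R^i\otimes_RX_0)^{**}\cong X_i$ is CM.

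Well-definedness of $G$ is then immediate from the definition of $\mathcal{E}$: each summand $(\omega_R^i\otimes_RY)^{**}$ is CM by hypothesis, so $G(Y)$ is CM over $R$ and hence over $S$, and the $S$-action is associative and unital thanks to the reflexivised multiplication in $S$. The unit and counit are built from $(\omega_R^0\otimes_RY)^{**}\cong Y$ (as $Y$ is reflexive) and from the identifications above, which assemble into a $\Zn$-graded $S$-module isomorphism $G(F(X))\cong X$.

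The main obstacle is the identification $(\omega_R^i\otimes_RX_0)^{**}\cong X_i$. Away from height one this is a formal reflexivity argument, but at height one the input is the explicit structure of the cover $S$ as a twisted $\Zn$-group algebra, where the $\Q$-Gorenstein hypothesis enters decisively through $\omega_R^n\cong R$. This local-to-global extension is precisely what the Gabriel cover formalism of Sections~\ref{Gabriel} and \ref{Gabriel2} is designed to handle, so in practice I would expect this theorem to be deduced as a CM-refinement of the more general equivalence stated in Theorem~\ref{main equivalence text}, with the subcategory $\mathcal{E}$ arising as the image of CM objects under the graded-to-ungraded comparison.
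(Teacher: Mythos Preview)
Your proposal is correct, and the functor $F=(-)_0$ you write down is in fact the same equivalence the paper obtains, but the route is different. The paper deduces the statement by specialising Theorem~\ref{main equivalence text} to $N=S$, which factors the equivalence as
\[
\CM^{\Zn}\!S \;\simeq\; \CM\End_R(S) \;\simeq\; \{X\in\refl R \mid \Hom_R(S,X)\in\CM R\},
\]
where the first step is the Gabriel-cover equivalence $\Mod^{\Zn}\!S\simeq\Mod\NCov{S}=\Mod\End_R(S)$ of Proposition~\ref{E and Sn}\eqref{E and Sn 2}, and the second restricts the equivalence $\Hom_R(S,-)\colon\refl R\to\refl\End_R(S)$ of Proposition~\ref{ref eq}. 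The right-hand side is then identified with your $\mathcal{E}$ via $\Hom_R(\omega_R^i,X)\cong(\omega_R^{-i}\otimes_RX)^{**}$. By contrast, you bypass the intermediary $\End_R(S)$ entirely and verify by hand, through localisation at height-one primes, that $(\omega_R^i\otimes_RX_0)^{**}\cong X_i$; this is essentially a direct proof that $(-)_0$ is an equivalence on reflexives for a strongly graded ring, which in the paper is packaged as Proposition~\ref{E and Sn}\eqref{E and Sn 1}. The paper's factorisation buys immediate generality (arbitrary $N$, hence Theorem~\ref{main equivalence text}), while your argument is specific to $N=S$ but has the merit of making the quasi-inverse $G$ and the identification of the target category completely explicit without invoking the covering formalism.
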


\subsection{Applications}
We give three main applications.  First, we are able to complement the work of \v{S}penko--Van den Bergh \cite{SpV}, and show that quotient singularities need not have NCCRs.
Recall that a reductive group $G$ acts \emph{generically} on a smooth affine variety $X$ if $X$ contains a point with closed orbit and trivial stabilizer, and the subset consisting of such points has codimension at least two in $X$.

\begin{corollary}[Corollary \ref{SVdB question}]
Let $G$ be a connected reductive group, and $W$ a $G$-representation. If $G$ acts generically on $X\colonequals \Spec SW$ such that $R\colonequals k[X]^G$ is not Gorenstein, then $R$ is not $\Q$-Gorenstein. Therefore $R$ admits no GM, whence no NCCR. \end{corollary}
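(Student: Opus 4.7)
The second assertion of the corollary follows quickly once the first is established: if $R$ is not $\Q$-Gorenstein then condition~(1) of Theorem~\ref{main GM} is violated, so $R$ carries no Gorenstein modification; and since every NCCR is in particular a Gorenstein modification (the finite global dimension of a Cohen--Macaulay algebra forces its self-injective dimension to be finite and equal to the dimension), there can be no NCCR either. Alternatively one can invoke Theorem~\ref{main NCCR}(1)(a) directly. Thus the real content of the corollary is the implication
\[
R\text{ not Gorenstein}\Longrightarrow R\text{ not }\Q\text{-Gorenstein}.
\]

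My plan is to turn the $\Q$-Gorenstein property into a torsion question inside the character group $\chi(G)\colonequals\Hom(G,\mathbb{G}_m)$, and then exploit the fact that this group is torsion-free for $G$ connected reductive. The key input from invariant theory is that, because $X=\Spec SW$ is a $G$-representation (so $\Pic(X)=0$) and the $G$-action is generic, one has a canonical isomorphism
\[
\chi(G)\;\cong\;\Cl(R),
\]
under which the class of $\omega_R$ corresponds to the canonical character $\chi_{\can}$ of $G$, namely the character by which $G$ acts on $\det W^{*}$. This is standard, and underlies much of \cite{SpV}; in particular $R$ is Gorenstein precisely when $\chi_{\can}=0$.

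Next I would observe that $\chi(G)$ is torsion-free: for $G$ connected reductive the commutator subgroup $[G,G]$ is semisimple, hence $G/[G,G]$ is a torus, and $\chi(G)=\chi(G/[G,G])$ is the character lattice of a torus, which is a finitely generated free abelian group. Therefore if $\chi_{\can}\neq 0$, i.e.\ if $R$ is not Gorenstein, then $n\chi_{\can}\neq 0$ in $\chi(G)$ for every $n\geq 1$, so that $[\omega_R^n]\neq 0$ in $\Cl(R)$ for every $n\geq 1$. After passing to the unique graded maximal ideal $\mm=R_+$, where $\Pic(R_\mm)=0$, a reflexive rank-one module is free if and only if its class in $\Cl(R_\mm)$ vanishes; thus $\omega_R^n$ is not free at $R_\mm$, and in particular is not locally free on $\Spec R$. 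This rules out the $\Q$-Gorenstein property.

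The only step of any genuine difficulty is pinning down the isomorphism $\chi(G)\cong\Cl(R)$ and the identification of $[\omega_R]$ with $\chi_{\can}$ at the stated level of generality; once these facts of invariant theory are cited (from \cite{SpV} and the classical references it builds on), the remainder collapses to the torsion-freeness of $\chi(G)$, which no finite power can undo.
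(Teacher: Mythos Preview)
Your argument is essentially the paper's: both identify $[\omega_R]$ with a character of $G$, invoke torsion-freeness of the character group of a connected group, and conclude that no power of $\omega_R$ can be trivial. Two places where your packaging differs from (and is slightly looser than) the paper deserve comment. First, you assert a full isomorphism $\chi(G)\cong\Cl(R)$; what \cite{SpV} actually supplies, and all you use, is the injection, obtained from the functor $\mod(G,k)\to\refl R$ being symmetric monoidal (Corollary~4.1.4) and reflecting isomorphisms (Lemma~4.1.5); the paper also cites Knop \cite{Knop} separately for $\omega_R\cong(SW\otimes_k\chi)^G$. Second, to pass from ``$\omega_R^n$ locally free'' back to the character side, the paper uses that $\omega_R^n$ is naturally $\Z$-graded and that $\mod^{\Z}\!R$ is Krull--Schmidt, so a graded rank-one projective is isomorphic to $R$. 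Your route through $R_\mm$ also works, but the step ``$[\omega_R^n]\neq0$ in $\Cl(R)$, hence $(\omega_R^n)_\mm$ is not free'' tacitly uses that $\Cl(R)\to\Cl(R_\mm)$ is injective for a positively graded normal domain with $R_0=k$; this is standard (Fossum), but should be made explicit.
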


Second, recall that a CM ring $R$ is called \emph{Cohen-Macaulay-finite} (CM-finite) if the category $\CM R$ admits an additive generator. If $R$ is complete local, this is equivalent to there being only finitely many isomorphism classes of indecomposable CM $R$-modules. By Auslander's argument (see e.g.\ \cite[Theorem 1.6]{NCCR lectures}), if  $\dim R=2$ and $R$ is either a normal domain, or a Gorenstein ring, then NCCRs of $R$ are precisely the additive generators of the category $\CM R$. Therefore, in this case, $R$ has an NCCR if and only if it is CM-finite.

There are two well known results on CM-finite rings.
One is a classification of CM-finite normal domains in dimension two due to Auslander \cite{A3} and Esnault \cite{E}, and the other is a classification of CM-finite Gorenstein rings in arbitrary dimension due to Buchweitz--Greuel--Schreyer \cite{BGS}.
 Theorem \ref{main} enables us to deduce the following Auslander--Esnault classification from the Buchweitz--Greuel--Schreyer classification.

\begin{corollary}\label{Auslander-Esnault}
Let $R$ be normal CM domain of dimension two satisfying one of the following conditions:
\begin{enumerate}
\item[$\bullet$] $R$ is local with algebraically closed residue field of characteristic zero.
\item[$\bullet$] $R$ is finitely generated over an algebraically closed field of characteristic zero.
\end{enumerate}
If $R$ is CM-finite, then complete locally it is a quotient singularity by a finite group.
\end{corollary}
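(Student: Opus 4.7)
The strategy is to use Theorem~\ref{main NCCR} to reduce to a Gorenstein canonical cover, invoke Buchweitz--Greuel--Schreyer to classify that cover, and then recognise the original ring as a quotient singularity via a purity-of-branch-locus argument.

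First, since $\dim R=2$ and $R$ is a normal CM domain, the Auslander argument recalled in the text identifies CM-finiteness of $R$ with the existence of an NCCR. Under the characteristic zero hypothesis, Theorem~\ref{main NCCR}(1) then forces $R$ to be $\Q$-Gorenstein of some index $n\ge 1$, and any Zariski local canonical cover $S$ of $R$ to be Gorenstein with an NCCR. Since $[\omega_R]$ has order exactly $n$ in the class group, $S$ is again a two-dimensional normal CM domain; being Gorenstein and admitting an NCCR, $S$ is itself CM-finite by a second application of Auslander.

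Next I would pass to a completion $\widehat{R_{\mm}}$ at a closed point --- automatic in the complete local bullet, and a standard flat base change in the finitely generated bullet --- so that its canonical cover $T$ becomes a complete local two-dimensional Gorenstein CM-finite normal domain over an algebraically closed field of characteristic zero. The Buchweitz--Greuel--Schreyer classification forces $T$ to be a simple (ADE) hypersurface singularity, and the classical Klein--Du~Val identification writes $T\cong k[[u,v]]^G$ for some finite $G\subset\SL(2,k)$, with the Klein cover $k[[u,v]]\to T$ \'etale in codimension one.

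Finally, the canonical cover $\widehat{R_{\mm}}\hookrightarrow T$ is $\Z/n$-Galois and \'etale in codimension one, since the non-Gorenstein locus of a normal ring has codimension at least two. Composing gives a finite extension $\widehat{R_{\mm}}\hookrightarrow k[[u,v]]$ that is \'etale in codimension one. Taking the integral closure of $\widehat{R_{\mm}}$ in a Galois closure of the fraction field extension yields a larger finite normal extension, still \'etale in codimension one over $\widehat{R_{\mm}}$; by Zariski--Nagata purity of the branch locus this cover is regular, hence a power series ring $k[[u',v']]$, and $\widehat{R_{\mm}}$ is its invariant subring under the Galois group $H$. A Cartan linearization then conjugates $H$ into $\GL(2,k)$, exhibiting $\widehat{R_{\mm}}$ as a quotient singularity by a finite group. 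I expect the main obstacle to be this last step: the reduction to an ADE cover is an immediate application of Theorem~\ref{main NCCR} and BGS, but reassembling $\widehat{R_{\mm}}$ into a genuine finite-group quotient of a power series ring requires the Galois-closure and purity argument to ensure the top of the tower is smooth.
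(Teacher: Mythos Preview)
Your overall strategy matches the paper's: pass to the completion at a closed point, use CM-finiteness to produce an NCCR, invoke Theorem~\ref{main NCCR} (equivalently Theorems~\ref{prop 1} and~\ref{main}) to make the canonical cover $S$ Gorenstein and CM-finite, and then apply Buchweitz--Greuel--Schreyer to see that $S$ is a simple singularity. The paper's proof is actually terser than yours at the final step: it simply records that $S$ is therefore a quotient singularity and that $R=S^{\Zn}$, and declares the result to follow. You instead spell out a Galois-closure and purity argument to promote this to an honest presentation of $\widehat{R_\mm}$ as $k[[u',v']]^H$; this is a reasonable way to fill in what the paper leaves implicit.

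There is, however, a genuine slip in your purity step. Zariski--Nagata purity of the branch locus requires the \emph{base} to be regular: from ``the Galois closure is \'etale in codimension one over $\widehat{R_\mm}$'' you cannot conclude regularity of the top, since $\widehat{R_\mm}$ is singular. The correct application is one level up: the Galois closure $\tilde{S}$ sits over $k[[u,v]]$ as well, and the composite $\tilde{S}\to k[[u,v]]\to\widehat{R_\mm}$ being \'etale in codimension one forces $\tilde{S}\to k[[u,v]]$ to be unramified in codimension one (and flat, since $\tilde{S}$ is normal of dimension two over a regular base). Purity over the regular base $k[[u,v]]$ then gives that $\tilde{S}\to k[[u,v]]$ is \'etale, hence $\tilde{S}$ is regular, and the rest of your argument goes through. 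With this correction your proof is complete and slightly more detailed than the paper's at the last step.
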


Our third application of  Theorem~\ref{main} is the following, which follows quickly from our main result, together with results of Kawamata \cite{K} and Stafford--Van den Bergh \cite{SV}.  This result is originally due to Ingalls--Yasuda \cite{IY}, who prove it by directly constructing a noncommutative-type canonical cover and considering ramification data, instead of lifting NCCRs to (commutative) canonical covers as we do here.

\begin{corollary}\label{log-terminal}
Assume that $R$ is a normal CM domain which is finitely generated over an algebraically closed field of characteristic zero. 
If $R$ has an NCCR, then it has at worst  log-terminal singularities.
\end{corollary}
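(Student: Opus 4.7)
The plan is to chain Theorem~\ref{main NCCR}(1) with the Stafford--Van den Bergh rationality theorem and Kawamata's cyclic cover criterion for log-terminality. Since log-terminality is a local property on $\Spec R$, it suffices to verify the conclusion after localising at each closed point $\pp$.

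First, because $R$ has an NCCR, Theorem~\ref{main NCCR}(1)(a) yields that $R$ is $\Q$-Gorenstein, so Zariski-local canonical covers exist. Localising at $\pp$, Theorem~\ref{main NCCR}(1)(b) then supplies a Gorenstein canonical cover $S$ of $R_{\pp}$ which itself admits an NCCR (given by a gradable module). Thus the problem is reduced to showing that, under these hypotheses, $R_{\pp}$ is log-terminal.

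Next I would invoke the Stafford--Van den Bergh theorem \cite{SV}: over an algebraically closed field of characteristic zero, a Gorenstein affine variety carrying an NCCR has rational singularities. Applied to $S$, this forces $S$ to have rational singularities, and since $S$ is Gorenstein this is the same as $S$ having canonical singularities (by Elkik).

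Finally, the classical result of Kawamata \cite{K} states that a $\Q$-Gorenstein singularity is log-terminal if and only if its (index-one) canonical cyclic cover has canonical singularities. Applying this at $\pp$ gives that $R_{\pp}$ is log-terminal, and letting $\pp$ range over all closed points of $\Spec R$ yields the claim. The principal obstacle I expect is purely bookkeeping: one must verify that the canonical cover produced by Theorem~\ref{main NCCR} truly agrees with Kawamata's index-one cover, and that Zariski localisation is compatible with both the cyclic cover construction and the NCCR property, so that the algebraic and birational statements can be combined cleanly.
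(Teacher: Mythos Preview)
Your overall strategy coincides with the paper's: deduce $\Q$-Gorensteinness from the NCCR, pass to a canonical cover carrying an NCCR, apply Stafford--Van den Bergh \cite{SV} to get rational singularities on the cover, then invoke Kawamata \cite{K} to conclude log-terminality downstairs.

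There is, however, a real technical gap in your localisation step, and it is not quite the bookkeeping issue you anticipate. You localise at a closed point $\pp$ and take a canonical cover $S$ of the local ring $R_{\pp}$. But $R_{\pp}$ is no longer finitely generated over $k$, hence neither is $S$; the Stafford--Van den Bergh theorem, on the other hand, is a statement about finitely generated $k$-algebras (it asserts that the centre of a homologically homogeneous affine $k$-algebra has rational singularities), so it does not apply to $S$ as you have set things up. The paper handles this by working with a finite affine open cover $\Spec R=\bigcup_i\Spec R_i$ in which each $R_i$ is $\Q$-CY (which exists once $R$ is known to be $\Q$-Gorenstein). Then each $R_i$, and hence each canonical cover $S_i$, remains of finite type over $k$, NCCRs localise to $R_i$ and lift to $S_i$ via Theorem~\ref{main}, and \cite{SV} applies to the resulting NCCR $\Lambda_i$ over $S_i$. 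One also needs $S_i$ normal (Proposition~\ref{gorensteiness}\eqref{gorensteiness 2}) to identify $S_i$ with the centre of $\Lambda_i$. With this adjustment your argument becomes the paper's.
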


\subsection{Notation}
All modules are left modules unless stated otherwise.
For a ring $A$, write $\Mod A$ for the category of $A$-modules, and  $\mod A$ for the full subcategory of $\Mod A$ consisting of finitely generated modules. When $A$ is a $G$-graded ring for a group $G$, we let $\Mod^G\!A$ denote the category of $G$-graded $A$-modules, and $\mod^G\!A$ the full subcategory of $\Mod^G\!A$ consisting of finitely generated modules.  Similarly, $\refl^G\!A$ denotes the full subcategory of $\mod^G\!A$ consisting of reflexive modules. We set $\Hom_A^G(M,N):=\Hom_{\Mod^G\!A}(M,N)$ for $G$-graded $A$-modules $M,N$.

\subsection{Acknowledgements} 
Part of this work was completed during the AIM SQuaRE: Cohen-Macaulay representations and categorical characterizations of singularities.  We thank AIM for funding, and for their kind hospitality.  Dao was further supported by NSA H98230-16-1-0012, Iyama by JSPS Grant-in-Aid for Scientific Research 16H03923, Takahashi by JSPS Grant-in-Aid for Scientific Research 16K05098, and Wemyss by EPSRC grant~EP/K021400/1. We thank Ragnar Buchweitz, Colin Ingalls, Srikanth Iyengar, Yusuke Nakajima, Takehiko Yasuda and Yuji Yoshino for some helpful conversations regarding the material in the paper.

\section{Covering theory}
In order to control the gap between a $\Q$-Gorenstein ring and its canonical covers, we recall the Gabriel cover in Section \ref{Gabriel} below. We give several basic properties, then apply them to obtain Theorem \ref{from S to R} in Section \ref{Gabriel2}, which plays a crucial role in future sections.
We then recall canonical covers of $\Q$-Gorenstein rings in Section~\ref{comm cover}, which are the main objects in this paper. Note that our approach is characteristic independent, and a canonical cover is not necessarily unique.

\subsection{Gabriel covers}\label{Gabriel} Throughout this subsection, let $G$ be a finite abelian group and $A=\bigoplus_{i\in G}A_i$ an arbitrary (not necessarily commutative) $G$-graded ring. We define the \emph{Gabriel cover} of $A$ by
\[ 
\NCov{A}\colonequals (A_{h-g})_{g,h\in G}.
\]
This has a natural ring structure given by the multiplication in $A$ and the matrix multiplication rule, that is,
\[
(x_{g,h})_{g,h\in G}\cdot(y_{g,h})_{g,h\in G}\colonequals (\sum_{i\in G}x_{g,i}y_{i,h})_{g,h\in G}.
\]

\begin{example}
When $G=\Zn$ and $A=\bigoplus_{i=0}^{n-1}A_i$ is a $\Zn$-graded ring, 
\[
\NCov{A}=\left[\begin{array}{ccccc}
A_0&A_1&\cdots&A_{n-2}&A_{n-1}\\
A_{n-1}&A_0&\cdots&A_{n-3}&A_{n-2}\\
\vdots&\vdots&\ddots&\vdots&\vdots\\
A_2&A_3&\cdots&A_0&A_1\\
A_1&A_2&\cdots&A_{n-1}&A_0
\end{array}\right].
\]
\end{example}

Recall that the endomorphism ring $\End_A(N)$ of $N\in\Mod^G\!A$ is canonically $G$-graded by $\End_A(N)_g\colonequals\Hom_A^G(N,N(g))$ for any $g\in G$. The following are basic observations.

\begin{lemma}\label{describe cyclic covering}
There is an isomorphism of rings $\End_A^G(\bigoplus_{g\in G}A(g))\cong\NCov{A}$. More generally, for any $N\in\Mod^G\!A$, there is an isomorphism of rings
\[
\End_A^G(\bigoplus_{g\in G}N(g))\cong \NCov{\End_A(N)}.
\]
\end{lemma}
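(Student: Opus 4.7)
The plan is to prove the second, more general isomorphism; the first will then follow as the special case $N=A$. The strategy is the standard decomposition of an endomorphism ring of a direct sum into a matrix of Hom-groups between summands, combined with a Hom-computation for graded shifts. For any $N\in\Mod^G\!A$ and $g,h\in G$, the underlying $A$-module of $N(g)$ is just $N$ with its grading relabelled, so a degree-preserving $A$-linear map $N(g)\to N(h)$ is the same data as a degree-$(h-g)$ graded $A$-endomorphism of $N$. This gives a canonical isomorphism of abelian groups
\[
\Hom_A^G(N(g),N(h))\;\cong\;\End_A(N)_{h-g},
\]
by the definition of the grading on $\End_A(N)$ recalled just before the lemma. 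Summing over $(g,h)\in G\times G$ and decomposing any endomorphism of $\bigoplus_{g\in G}N(g)$ into its components then identifies
\[
\End_A^G\!\Bigl(\bigoplus_{g\in G}N(g)\Bigr)\;\cong\;\bigoplus_{g,h\in G}\End_A(N)_{h-g},
\]
which is by definition the underlying abelian group of $\NCov{\End_A(N)}$.

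To upgrade this to a ring isomorphism, I would verify directly that the composition of two endomorphisms $x,y$ of $\bigoplus_{g\in G}N(g)$ decomposes componentwise as $(xy)_{g,h}=\sum_{i\in G}x_{g,i}\,y_{i,h}$, where the product on the right is computed inside $\End_A(N)$. The degrees add correctly, since $x_{g,i}\in\End_A(N)_{i-g}$ and $y_{i,h}\in\End_A(N)_{h-i}$ compose to an element of degree $(i-g)+(h-i)=h-g$, as required for the $(g,h)$-entry. This formula matches precisely the matrix multiplication defining $\NCov{\End_A(N)}$, which was set up for exactly this purpose. The first statement then follows as the $N=A$ specialization.

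The one step genuinely requiring care is tracking the direction of composition against the order of matrix multiplication, to ensure that the correspondence is a ring homomorphism rather than an anti-homomorphism. This is the only real bookkeeping obstacle, and the point is that the definition of $\NCov$ was rigged so that the natural identification works on the nose. Since $G$ is abelian, no further subtlety appears in the degree indices, and the verification is otherwise a routine check.
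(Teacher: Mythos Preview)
Your proposal is correct and is exactly the standard argument one would give here. The paper in fact offers no proof of this lemma at all: it is introduced as one of the ``basic observations'' and left to the reader, so your write-up supplies precisely what the paper omits. The only point worth double-checking, which you already flag, is the convention matching composition order with the matrix multiplication rule in the definition of $\NCov{A}$; once that is pinned down the verification is routine.
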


We regard $A$ as a subring of $\NCov{A}$ by the diagonal embedding
\[
A\to\NCov{A}.
\]
In particular, any $\NCov{A}$-module $X$ is naturally regarded as a $G$-graded $A$-module with $X_g\colonequals e_gX$ for $g\in G$, where $e_g$ is the idempotent of $\NCov{A}$ with $(g,g)$-entry $1$ and the other entries $0$. Consequently, there is a functor
\[
\Mod\NCov{A}\to\Mod^G\!A
\]
called the \emph{push down functor} \cite{G,As}.

\begin{proposition}\label{global 1}
Let $G$ be a finite abelian group and $A$ a $G$-graded ring.
\begin{enumerate}
\item\label{global 1 1}  The push down functor $\Mod\NCov{A}\to\Mod^G\!A$ is an equivalence.
\item\label{global 1 2}  We have $\gl\NCov{A}\le\gl A$ and $\injdim_{\sNCov{A}}\NCov{A}\le\injdim_AA$.
\item\label{global 1 3} As a left (respectively, right) $A$-module, $\NCov{A}$ is free of rank $\order{G}$. Therefore the equivalence $\Mod\NCov{A}\to\Mod^G\!A$ sends $\NCov{A}$ to $A^{\oplus \order{G}}$.
\end{enumerate}
\end{proposition}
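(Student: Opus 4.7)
I would prove the freeness in (3) first by a direct matrix computation, then use Morita theory to establish (1) and deduce the remaining claim of (3), and finally exploit the equivalence from (1) to prove (2) via a graded-module analysis. For the freeness, I would decompose $\NCov A=\bigoplus_{g\in G}\NCov A\cdot e_g$ as a left $\NCov A$-module (the column decomposition) and restrict to $A$ via the diagonal embedding. The column $\NCov A\cdot e_g$ has $(h,g)$-entry in $A_{g-h}$, so as an abelian group it is $\bigoplus_h A_{g-h}\cong\bigoplus_k A_k=A$, and a direct check shows $e_g$ generates it freely over $A$. Hence $\NCov A\cong A^{\oplus|G|}$ as a left $A$-module, and symmetrically via rows as a right $A$-module.

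For (1), I would apply Morita theory with $P\colonequals\bigoplus_{g\in G}A(g)\in\Mod^G\!A$: this $P$ is finitely generated (as $G$ is finite), each summand is graded free, and any graded module admits a graded surjection from a direct sum of shifts of $A$, so $P$ is a projective generator. Lemma~\ref{describe cyclic covering} identifies $\End_A^G(P)\cong\NCov A$, and Morita theory yields $\Mod\NCov A\simeq\Mod^G\!A$, unwinding to the explicit push-down $X\mapsto(e_gX)_g$. Under this equivalence, $\NCov A$ (as a left module over itself) corresponds to $\bigoplus_g e_g\NCov A\cong\bigoplus_g A(g)$, which becomes $A^{\oplus|G|}$ on forgetting the grading, completing (3).

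For (2), the main technical input is the finite Hom decomposition: for any graded modules $M,N$ and finite $G$, $\Hom_A(M,N)=\bigoplus_{k\in G}\Hom_A^G(M,N(k))$, since any $A$-linear map splits uniquely into its finitely many graded components. Two consequences drive the proof. First, any graded module $N$ that is ungraded projective is graded projective, via the degree-zero component trick: taking a graded surjection $\pi\colon F\twoheadrightarrow N$ with graded free $F$, an ungraded section $\sigma$ exists by hypothesis and decomposes as $\sigma=\sum_k\sigma_k$ with $\sigma_k$ graded of degree $k$; since $\pi\circ\sigma=\mathrm{id}$ has degree-zero component $\pi\circ\sigma_0=\mathrm{id}$, the graded morphism $\sigma_0$ is a section of $\pi$. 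Second, for any graded $M$, $\Ext^n_{\Mod^G\!A}(M,\bigoplus_g A(g))\cong\Ext^n_A(M,A)$, obtained by applying $\Hom_A^G(-,\bigoplus_g A(g))$ to a graded projective resolution of $M$ (which by the first consequence is also an ungraded projective resolution) and collapsing the double sum via the Hom decomposition. The first consequence then shows that for any graded $M$ with $\pd_A M=n$ the graded $n$-th syzygy is graded projective, giving $\pd_{\NCov A}(M)\le n$ and hence $\gl\NCov A\le\gl A$; the second gives $\injdim_{\sNCov A}\NCov A\le\injdim_A A$ directly, since $\Ext^n_A(-,A)$ vanishes for $n>\injdim_A A$.

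The main obstacle will be pinning down the grading conventions---verifying that the diagonal embedding and the idempotents $e_g$ are compatible so that the push-down really does land in $\Mod^G\!A$ as stated, and that the shifts align so that $\NCov A$ corresponds to $\bigoplus_g A(g)$ under the equivalence. Once these conventions are settled, both homological comparisons in (2) reduce cleanly to the ungraded case without requiring any Noetherian hypothesis.
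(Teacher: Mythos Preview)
Your proposal is correct and matches the paper's proof for (1) and (3); the paper simply cites (1) as well known, while you supply the Morita argument explicitly. For (2) you take a minor detour---bounding global dimension via the lemma that a graded module which is ungraded projective is graded projective---whereas the paper handles both inequalities uniformly through the single identity $\Ext^i_{\Mod^G\!A}(X,Y)=\Ext^i_A(X,Y)_0$; both arguments rest on the same finite Hom decomposition and are equally valid.
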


\begin{proof}
(1) This is well known, see e.g. \cite[Theorem 3.1]{IL}.\\
(3) Since $(\NCov{A})e_g\cong A$ as left $A$-modules, the decomposition $\NCov{A}=\bigoplus_{g\in G}(\NCov{A})e_g$ as $A$-modules gives the result.\\
(2) Since $\Ext^i_{\Mod^G\!A}(X,Y)=\Ext^i_A(X,Y)_0$ holds for all $X,Y\in\Mod^G\!A$, it follows that
\begin{eqnarray*}
&\gl\NCov{A}=\gl(\Mod^G\!A)\le\gl A,&\\
&\injdim_{\sNCov{A}}\NCov{A}=\injdim_{\Mod^G\!A}A\le\injdim_AA&
\end{eqnarray*}
by \eqref{global 1 1} and \eqref{global 1 3}.
\end{proof}

Now we study when equalities in Proposition \ref{global 1}\eqref{global 1 2} hold. Recall that an additive functor $F\colon\CC\to\DD$ is \emph{dense up to direct summands} if for any $X\in\DD$, there exists $Y\in\CC$ such that $X$ is a direct summand of $F(Y)$.

\begin{proposition}\label{gl.dim of *}
Let $G$ be a finite abelian group, and $A$ a $G$-graded ring such that $\order{G}$ is invertible in $A_0$.
\begin{enumerate}
\item\label{gl.dim of * 1} The forgetful functor $\Mod^G\!A\to\Mod A$ is dense up to direct summands.
\item\label{gl.dim of * 2} We have $\gl\NCov{A}=\gl A$ and $\injdim_{\sNCov{A}}\NCov{A}=\injdim_AA$.
\item\label{gl.dim of * 3} $A$ is a direct summand of $\NCov{A}$ as an $A$-bimodule.
\end{enumerate}
\end{proposition}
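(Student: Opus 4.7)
My plan is to first establish (3) by a direct averaging construction, and then derive (1) and (2) from this bimodule splitting together with Proposition~\ref{global 1}. For (3) I would write down an explicit $A$-bimodule retraction of the diagonal inclusion $\iota\colon A\hookrightarrow\NCov{A}$: define $\phi\colon\NCov{A}\to A$ by $\phi((x_{g,h})_{g,h})\colonequals\tfrac{1}{\order{G}}\sum_{g,h\in G}x_{g,h}$, which is well-defined as an element of $A=\bigoplus_{k\in G}A_k$ since each $x_{g,h}$ lies in $A_{h-g}$. Three routine checks then remain: (i)~$\phi\circ\iota=\mathrm{id}_A$, since for each $k\in G$ there are exactly $\order{G}$ pairs $(g,h)$ with $h-g=k$, and this factor cancels precisely because $\order{G}\in A_0^\times$; (ii)~left $A$-linearity $\phi(\iota(a)x)=a\phi(x)$, which after expanding the matrix product reduces to the identity $\sum_g a_{i-g}=a$ for each fixed $i$; and (iii)~right $A$-linearity, by a symmetric calculation.

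Granting (3), so that $\NCov{A}=A\oplus C$ as $(A,A)$-bimodules, part (1) follows immediately. For any $M\in\Mod A$, the $\NCov{A}$-module $\NCov{A}\otimes_A M$, viewed via the pushdown equivalence of Proposition~\ref{global 1}\eqref{global 1 1} as a graded $A$-module $Y$, has underlying $A$-module $U(Y)=\NCov{A}\otimes_A M=M\oplus(C\otimes_A M)$, so $M$ is a direct summand of $U(Y)$.

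For (2), the inequalities $\gl\NCov{A}\le\gl A$ and $\injdim_{\sNCov{A}}\NCov{A}\le\injdim_A A$ come from Proposition~\ref{global 1}\eqref{global 1 2}, so only the reverse inequalities need argument. For $\gl A\le\gl\NCov{A}$: for any $M\in\Mod A$, one has $\pd_{\sNCov{A}}(\NCov{A}\otimes_A M)\le\gl\NCov{A}$, and since $\NCov{A}$ is free as a left $A$-module by Proposition~\ref{global 1}\eqref{global 1 3}, restriction preserves projectivity of resolutions, giving $\pd_A(\NCov{A}\otimes_A M)\le\gl\NCov{A}$; combining with (1), which realizes $M$ as an $A$-summand, yields $\pd_A M\le\gl\NCov{A}$. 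For $\injdim_A A\le\injdim_{\sNCov{A}}\NCov{A}$ I would use the derived tensor-Hom adjunction, valid because $\NCov{A}$ is flat as a right $A$-module, to identify
\[
\Ext^i_{\sNCov{A}}(\NCov{A}\otimes_A M,\NCov{A})\cong\Ext^i_A(M,\NCov{A}|_A);
\]
the left-hand side vanishes for $i>\injdim_{\sNCov{A}}\NCov{A}$, while the bimodule splitting from (3) makes $\Ext^i_A(M,A)$ a direct summand of the right-hand side, forcing its vanishing.

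I expect the main obstacle to be the averaging construction in (3): verifying bilinearity of $\phi$ requires carefully tracking the matrix indexing conventions of $\NCov{A}$, and the invertibility hypothesis $\order{G}\in A_0^\times$ enters only at this step. Once (3) is in hand, (1) is immediate and (2) reduces to standard adjunction and restriction manipulations using the freeness of $\NCov{A}$ on both sides.
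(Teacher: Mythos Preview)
Your proof is correct, and parts (1) and (3) match the paper's approach exactly: the paper defines the same averaging retraction (written as $(x_{g,h})\mapsto\bigl(\tfrac{1}{\order{G}}\sum_{h-g=i}x_{g,h}\bigr)_{i\in G}$, which is your $\phi$ organized by homogeneous degree), and derives (1) from (3) via $Y=(\NCov{A})\otimes_A X$ just as you do.

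For (2) the paper takes a slightly different route. Rather than restricting projective resolutions for the global dimension bound and invoking the tensor--Hom adjunction for the injective dimension bound, it uses the single identity
\[
\bigoplus_{g\in G}\Ext^i_{\Mod^G\!A}(X,Y(g))=\Ext^i_A(X,Y)
\]
for graded $X,Y$, which together with the density statement (1) yields both reverse inequalities at once, in parallel with the proof of Proposition~\ref{global 1}\eqref{global 1 2}. Your arguments are equally valid and arguably more self-contained, since they avoid any discussion of graded Ext; the paper's approach is more uniform, treating $\gl$ and $\injdim$ symmetrically by the same formula.
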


\begin{proof}
(3) We claim that the diagonal embedding $A\to\NCov{A}$ splits, as a morphism of $A$-bimodules.  Since $\order{G}$ is invertible, the morphism
\[
(x_{g,h})_{g,h\in G}\mapsto \left(\frac{1}{\order{G}}\sum_{h-g=i}x_{g,h}\right)_{i\in G}
\]
gives a splitting.\\
(1) For any $A$-module $X$, consider the  $\NCov{A}$-module $Y\colonequals (\NCov{A})\otimes_AX$.
Then $X$ is a direct summand of $Y$ as an $A$-module by \eqref{gl.dim of * 3}.\\
(2) Since $\bigoplus_{g\in G}\Ext^i_{\Mod^G\!A}(X,Y(g))=\Ext^i_A(X,Y)$ holds  for any $X,Y\in\Mod^G\!A$, the assertion follows from \eqref{gl.dim of * 1} by a similar argument as in the proof of Proposition \ref{global 1}\eqref{global 1 2}.
\end{proof}

In Proposition \ref{gl.dim of *}, we can not drop the assumption that $\order{G}$ is invertible in $A_0$.

\begin{example}
Let $k$ be a field of characteristic $2$ and $A\colonequals k[x]/(1+x^2)$ be a $\Z_2$-graded ring given by $\deg x=1$.
Then we have $\gl A=\infty$ since $A=k[x]/(1+x)^2$.
On the other hand, $A\mypound\Z_2={\rm M}_2(k)$ holds, and hence we have $\gl A\mypound\Z_2=0$.
\end{example}

Now we show that there is a close relationship between Gabriel covers and skew group rings under the assumption that the centre of $A_0$ contains a primitive $\order{G}$-th root $\zeta$ of unity.
Let $\langle\zeta\rangle$ be the subgroup of $A_0^\times$ generated by $\zeta$ and $G^\vee:=\Hom_{\Z}(G,\langle\zeta\rangle)$ be the dual abelian group of $G$. Then there is a non-degenerate pairing $G^\vee\times G\to\langle\zeta\rangle$, which gives an action of $G^\vee$ on $A$ by
\begin{equation}\label{G action}
f\cdot (x_g)_{g\in G}=(f(g)x_g)_{g\in G}.
\end{equation}
Hence we may form the skew group ring $A*G^\vee$. 
For an $A$-module $X$ and $f\in G^\vee$, we write ${}_fX$ for the $A$-module obtained by twisting the action of $A$ on $X$ by $f$.
Similarly we define $Y_f$ for an $A^{\op}$-module $Y$ and $f\in G^\vee$.

We note the following information, although we will not use it in this paper.

\begin{proposition}\label{G-covering is *}
Let $G$ be a finite abelian group and $A$ a $G$-graded ring such that the centre of $A_0$ contains $\order{G}^{-1}$ and a primitive $\order{G}$-th root of unity.
\begin{enumerate}
\item As rings, $\NCov{A}\cong A*G^\vee$.
\item As $A$-bimodules, $\NCov{A}\cong\bigoplus_{f\in G^\vee}A_f$.
\item For any $X\in\Mod A$, there exists $Y\in\Mod^G\!A$ such that $Y\cong\bigoplus_{f\in G^\vee} {}_fX$ as $A$-modules.
\end{enumerate}
\end{proposition}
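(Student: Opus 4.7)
The plan is to establish (1) by exhibiting an explicit ring isomorphism $A*G^\vee\cong\NCov{A}$ via characters, and then to deduce (2) and (3) by reading off the resulting decomposition.

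For (1), writing $u_f$ for the basis element of $A*G^\vee$ associated to $f\in G^\vee$, I would introduce the elements
\[
e_g\colonequals \frac{1}{\order{G}}\sum_{f\in G^\vee}f(g)\,u_f\qquad(g\in G).
\]
The hypotheses on $A_0$ guarantee the character orthogonality relations $\sum_{f\in G^\vee}f(g)=\order{G}\delta_{g,0}$, which immediately imply that $\{e_g\}_{g\in G}$ is a complete set of orthogonal idempotents summing to $1$. Using the skew-product rule $u_f a=(f\cdot a)u_f$ together with $f\cdot a=f(i)a$ for $a\in A_i$, a short computation yields $e_g a e_h=\delta_{i,h-g}\,a e_h$, and hence $e_g(A*G^\vee)e_h\cong A_{h-g}$. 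Assembling these via the Peirce decomposition with respect to $(e_g)_{g\in G}$ then identifies $A*G^\vee$ with $(A_{h-g})_{g,h\in G}=\NCov{A}$ as rings.

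For (2), the decomposition $A*G^\vee=\bigoplus_{f\in G^\vee}Au_f$ is manifestly an $A$-bimodule decomposition. The left action on each summand $Au_f$ is standard, whereas $(au_f)b=a(f\cdot b)u_f$ shows that the right action is twisted by $f$, so $Au_f\cong A_f$ as $A$-bimodules; combined with (1) this gives the claimed $\NCov{A}\cong\bigoplus_{f\in G^\vee}A_f$. For (3), the natural candidate is $Y\colonequals\NCov{A}\otimes_AX$, which is an $\NCov{A}$-module and hence $G$-graded via the push-down equivalence in Proposition~\ref{global 1}(1). Tensoring (2) with $X$ over $A$ then reduces matters to the identification $A_f\otimes_AX\cong{}_{f^{-1}}\!X$ as left $A$-modules, which I would realise by the assignment $a\otimes x\mapsto(f^{-1}\cdot a)x$; a short check against the twisted right action defining $A_f$ shows this map is well defined and $A$-linear with inverse $x\mapsto 1\otimes x$, and reindexing $f\mapsto f^{-1}$ recovers $\bigoplus_{f\in G^\vee}{}_fX$.

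The main obstacle is bookkeeping rather than substance: character conventions (whether $e_g$ is built from $f(g)$ or $f(g)^{-1}$) and twist conventions (whether one works with $f$ or $f^{-1}$, and which side of each bimodule is twisted) must be chosen consistently so that the matrix rule in $\NCov{A}$ (with $(g,h)$-entry in $A_{h-g}$), the relation $u_fa=(f\cdot a)u_f$ in $A*G^\vee$, and the defining twist of $A_f$ all align. Once the idempotents in (1) are set up with the correct character convention, parts (2) and (3) reduce to short computations requiring no further input.
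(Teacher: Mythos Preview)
Your proof is correct and closely parallels the paper's. For (1) you run the construction in the opposite direction: the paper embeds $A$ diagonally and $G^\vee$ as the diagonal matrices $(\delta_{g,h}f(g))_{g,h}$ inside $\NCov{A}$, checks that conjugation by $f$ reproduces the grading action, and then verifies bijectivity of the resulting map $A*G^\vee\to\NCov{A}$ via a Vandermonde argument; you instead build the orthogonal idempotents $e_g$ inside $A*G^\vee$ by the inverse Fourier transform and read off $\NCov{A}$ from the Peirce decomposition. These are literally inverse isomorphisms, so the content is the same. Parts (2) and (3) are handled identically in both proofs (set $Y=\NCov{A}\otimes_AX$ and use the bimodule decomposition), with your explicit verification of $A_f\otimes_AX\cong{}_{f^{-1}}X$ and the subsequent reindexing making precise a step the paper records simply as $A_f\otimes_AX={}_fX$.
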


\begin{proof}
(1) We regard $G^\vee$ as a subgroup of $(\NCov{A})^\times$ by
\[
G^\vee\ni f\mapsto (\delta_{g,h}f(g))_{g,h\in G},
\]
where $\delta_{g,h}$ is the Kronecker delta. Then the conjugate $f^{-1}\cdot f$ restricts to an automorphism of the subring $A$ of $\NCov{A}$, which coincides with the action \eqref{G action} since the $(g,h)$-entry of $f^{-1}xf$ is $f^{-1}(g)x_{h-g}f(h)=f(h-g)x_{h-g}$.
Thus the above embeddings give a morphism $A*G^\vee\to \NCov{A}$ of rings.
This is bijective since there is a direct sum decomposition $\NCov{A}=\bigoplus_{f\in G^\vee}Af$, which 
can be checked by using the Vandermonde determinant.\\
(2) We have isomorphisms $\NCov{A}=\bigoplus_{f\in G^\vee}Af$ and $Af\cong A_f$ of $A$-bimodules.\\
(3) Let $Y:=(\NCov{A})\otimes_AX\in\Mod\NCov{A}=\Mod^G\!A$. Then in $\Mod A$ 
\[
Y\cong\bigoplus_{f\in G^\vee}A_f\otimes_AX=\bigoplus_{f\in G^\vee}{}_fX.\qedhere
\]
\end{proof}

\subsection{Application to Gorenstein modifications}\label{Gabriel2}

In this subsection, we apply observations in the previous subsection to study Gorenstein modifications.
%We end this subsection with studying the connection between the categories $\Mod^G\!A$ and $\Mod A_0$. 

We first recall a basic notion. For a commutative ring $R$, we say that an $R$-algebra $A$ is \emph{symmetric} if $\Hom_R(A,R)\cong A$ as $A$-bimodules. 

\begin{lemma}\label{S R reflexive}
Let $R$ be a commutative ring and $A$ a symmetric $R$-algebra.
\begin{enumerate}
\item\label{S R reflexive 1} There is an isomorphism $\Hom_{R}(-,R)\cong\Hom_A(-,A)$ of functors $\Mod A\to\Mod A^{\op}$.
\item\label{S R reflexive 2} An $A$-module is reflexive if and only if it is reflexive as an $R$-module.
\end{enumerate}
\end{lemma}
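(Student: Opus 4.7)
The key input is the defining isomorphism $\Hom_R(A,R)\cong A$ of $A$-bimodules coming from symmetry. My plan is to derive (1) by a standard adjunction calculation, then deduce (2) formally from (1).

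For part (1), I would use the $\otimes$-$\Hom$ adjunction. For any $M\in\Mod A$, there is a chain of natural isomorphisms
\[
\Hom_A(M,A)\cong\Hom_A(M,\Hom_R(A,R))\cong\Hom_R(A\otimes_AM,R)\cong\Hom_R(M,R),
\]
where the first uses the symmetric hypothesis, the second is the standard adjunction, and the third is the canonical identification $A\otimes_AM\cong M$. The right $A$-module structure on both ends agrees because in $\Hom_A(M,\Hom_R(A,R))$ the right $A$-action is induced from the right $A$-action on the bimodule $\Hom_R(A,R)$, which under the bimodule isomorphism $\Hom_R(A,R)\cong A$ corresponds to right multiplication on $A$, and under the adjunction this transports to the right $A$-action on $\Hom_R(M,R)$ given by pre-composition with the left $A$-action on $M$. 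All three isomorphisms are functorial in $M$, which gives the claimed isomorphism of functors $\Mod A\to\Mod A^{\op}$.

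For part (2), applying (1) twice gives natural isomorphisms
\[
\Hom_A(\Hom_A(M,A),A)\cong\Hom_R(\Hom_R(M,R),R),
\]
and it is straightforward to check that under these identifications the biduality map $M\to M^{**}$ as an $A$-module coincides with the biduality map as an $R$-module (both are just evaluation). Hence the $A$-biduality map is an isomorphism precisely when the $R$-biduality map is, so $M$ is reflexive as an $A$-module if and only if it is reflexive as an $R$-module. Here one should also note that since $A$ is finitely generated as an $R$-module (implicit in the symmetric algebra setup, via $\Hom_R(A,R)\cong A$), a finitely generated $A$-module is finitely generated as an $R$-module, and conversely any $R$-finitely-generated $A$-submodule is $A$-finitely-generated, so the finite generation hypothesis in the definition of reflexivity poses no issue.

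The only subtle step is checking that the bimodule structures in (1) line up correctly through the adjunction; nothing else is more than routine bookkeeping.
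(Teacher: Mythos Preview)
Your proof is correct and follows essentially the same route as the paper: both use the bimodule isomorphism $A\cong\Hom_R(A,R)$ together with Hom--tensor adjunction to obtain (1), and then apply (1) twice and check compatibility with the evaluation maps for (2). One minor remark: your side comment that symmetry of $A$ forces $A$ to be finitely generated over $R$ is not justified by the isomorphism $\Hom_R(A,R)\cong A$ alone, but this plays no role in the argument since reflexivity here is simply the condition that the biduality map be an isomorphism.
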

\begin{proof}
(1) The given isomorphism $A\cong\Hom_{R}(A,R)$ induces  isomorphisms of functors 
\[
\Hom_{A}(-,A)\cong\Hom_A(-,\Hom_{R}(A,R))\cong\Hom_R(-,R).
\]
(2) By \eqref{S R reflexive 1}, there is an isomorphism $\Hom_R(\Hom_{R}(-,R),R)\cong\Hom_{A^{\op}}(\Hom_A(-,A),A)$ of functors $\Mod A\to\Mod A$. It can be checked easily that this commutes with the evaluations $1_{\Mod A}\to\Hom_R(\Hom_{R}(-,R),R)$ and $1_{\Mod A}\to\Hom_{A^{\op}}(\Hom_{A}(-,A),A)$, and thus the assertion follows.
\end{proof}

Recall that a $G$-graded ring $A$ is \emph{strongly $G$-graded} if the natural map
\[
A_h\to\Hom_{A_0}(A_g,A_{g+h}),\ \ \ x\mapsto(y\mapsto yx)
\]
is an isomorphism for any $g,h\in G$. We need the following easy observations.

\begin{lemma}\label{End is G-covering}
Let $G$ be a finite abelian group, and $A$ a $G$-graded ring. 
\begin{enumerate}
\item \label{End is G-covering 1} 
There is a natural morphism
\begin{equation*}%\label{SG and End}
\upvarphi\colon\NCov{A}\to\End_{A_0}(A)
\end{equation*}
of rings sending $(x_{g,h})_{g,h\in G}$ to the morphism $(y_h)_{h\in G}\mapsto (\sum_{g\in G}y_gx_{g,h})_{h\in G}$.
\item\label{End is G-covering 2} $A$ is strongly $G$-graded if and only if $\upvarphi$ is an isomorphism.
\item\label{End is G-covering 3} If $A$ is strongly $G$-graded, then for any $g\in G$, the morphism $A\to\Hom_{A_0}(A,A_g)$ sending $x\in A$ to $y\mapsto (yx)_g$ is an isomorphism of $A$-modules.
\item\label{End is G-covering 4} If $A$ is strongly $G$-graded and commutative, then $A$ is a symmetric $A_0$-algebra.
\end{enumerate}
\end{lemma}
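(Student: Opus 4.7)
The plan is to verify the four parts in order, unpacking the decomposition $A = \bigoplus_{g \in G} A_g$ throughout. Part (1) is a direct check: the $(g,h)$-entry of a matrix in $\NCov{A}$ lies in $A_{h-g}$, so multiplication by it carries $A_g$ into $A_h$, and the given formula therefore defines an $A_0$-linear endomorphism of $A$. That $\upvarphi$ respects products is the routine matching of the matrix-multiplication rule in $\NCov{A}$ with the corresponding law in $\End_{A_0}(A)$.

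For (2), I would decompose $\End_{A_0}(A) = \bigoplus_{g, h \in G} \Hom_{A_0}(A_g, A_h)$, which is a finite direct sum since $G$ is finite. Under this decomposition, $\upvarphi$ is precisely the aggregate, over all pairs $(g, h)$, of the natural maps $A_{h-g} \to \Hom_{A_0}(A_g, A_h)$ given by $x \mapsto (y \mapsto yx)$. Consequently, $\upvarphi$ is an isomorphism if and only if every such component map is an isomorphism, which is exactly the defining condition for $A$ to be strongly $G$-graded.

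For (3), fix $g \in G$ and analyse $\psi \colon A \to \Hom_{A_0}(A, A_g)$, $x \mapsto (y \mapsto (yx)_g)$ componentwise. For $x \in A_k$ and $y \in A_h$, one has $(yx)_g = yx$ when $h + k = g$ and $0$ otherwise, so the restriction of $\psi$ to $A_k$ factors through $\Hom_{A_0}(A_{g-k}, A_g)$ via the map $x \mapsto (y \mapsto yx)$. Strong grading makes this an isomorphism for each $k$, and summing the finite direct sum over $k$ exhibits $\psi$ as an isomorphism $A \cong \bigoplus_h \Hom_{A_0}(A_h, A_g) = \Hom_{A_0}(A, A_g)$. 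The $A$-linearity is immediate from $\psi(ax)(y) = (yax)_g = \psi(x)(ya)$.

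For (4), I would apply (3) with $g = 0$ to obtain an $A$-module isomorphism $A \cong \Hom_{A_0}(A, A_0)$. Since $A$ is commutative, the left and right $A$-actions agree on both sides, so this is automatically a bimodule isomorphism, which is exactly the condition that $A$ is symmetric as an $A_0$-algebra. No serious obstacle presents itself; the essential insight is the identification in part (2) of $\upvarphi$, componentwise, as the aggregate of the natural strong-grading maps, from which (3) and (4) follow by specialization and, in the commutative case, by the automatic matching of left and right actions.
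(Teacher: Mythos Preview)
Your proposal is correct and follows essentially the same route as the paper, which simply records that parts (1)--(3) are ``immediate from definition'' and that (4) follows from (3) with $g=0$ together with commutativity. Your write-up fills in precisely the componentwise unpacking (matching $\upvarphi$ with the aggregate of the strong-grading maps $A_{h-g}\to\Hom_{A_0}(A_g,A_h)$) that the paper leaves implicit, and your argument for (4) is identical to the paper's.
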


\begin{proof}
(1)(2)(3) Immediate from definition.\\
(4) Since $A$ is commutative, the isomorphism $A\cong\Hom_{R}(A,R)$ in \eqref{End is G-covering 3} for $g=0$ is an isomorphism of $A$-bimodules.
\end{proof}

For a symmetric $R$-algebra $A$, we denote by $\refl A$ the category of reflexive $A$-modules, or equivalently, $A$-modules which are reflexive as an $R$-module by Lemma \ref{S R reflexive}\eqref{S R reflexive 2}.

We now prepare results on graded normal domains.
First we recall the following well-known result; see e.g.\ \cite[Proposition 2.4]{IR}.

\begin{proposition}\label{ref eq}
Let $R$ be a normal domain and $M\in\refl R$. Then $\End_R(M)$ is a symmetric $R$-algebra and there is an equivalence
\[\Hom_R(M,-):\refl R\to\refl\End_R(M).\]
\end{proposition}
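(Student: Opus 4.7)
The plan is to reduce both claims -- the symmetry of $A := \End_R(M)$ and the equivalence $\Hom_R(M,-)\colon \refl R \to \refl A$ -- to the codimension-one situation, where $M$ becomes free and $A$ becomes a matrix algebra, and then to globalize using the fact that reflexive modules over a normal noetherian domain are determined by their height-one localizations.

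First I would record that $A$ is itself reflexive as an $R$-module, since over a normal noetherian domain the $\Hom$ of two reflexive modules is reflexive. For each height-one prime $\pp$, the DVR $R_\pp$ makes $M_\pp$ free of rank $r := \operatorname{rk}_R M$ (assuming $M \neq 0$), whence $A_\pp \cong \mathrm{M}_r(R_\pp)$. The trace pairing provides a canonical $\mathrm{M}_r(R_\pp)$-bimodule isomorphism $\mathrm{M}_r(R_\pp) \cong \Hom_{R_\pp}(\mathrm{M}_r(R_\pp), R_\pp)$, and these local isomorphisms assemble into a global $A$-bimodule map $A \to \Hom_R(A, R)$. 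Since both sides are reflexive $R$-modules, this map is a global isomorphism once it is a local isomorphism at every height-one prime, and this proves that $A$ is a symmetric $R$-algebra.

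With symmetry in hand, Lemma \ref{S R reflexive}\eqref{S R reflexive 2} identifies $\refl A$ with the full subcategory of $\Mod A$ whose objects are reflexive as $R$-modules; in particular $F := \Hom_R(M,-)$ lands in $\refl A$. For the quasi-inverse I would take $G := (-\otimes_A M)^{**}$, where $(-)^{**}$ is the reflexivization functor over $R$; $G$ manifestly takes values in $\refl R$. The natural unit/counit transformations $N \to G(F(N))$ and $F(G(X)) \to X$ can then be checked at each height-one prime $\pp$, where classical Morita theory applies: $M_\pp$ is a progenerator of $\mod R_\pp$ with endomorphism ring $A_\pp \cong \mathrm{M}_r(R_\pp)$, giving an equivalence $\mod R_\pp \simeq \mod A_\pp$ precisely via $\Hom_{R_\pp}(M_\pp,-)$ with inverse $-\otimes_{A_\pp} M_\pp$. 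Since all objects involved are reflexive over $R$, agreement of these natural transformations in codimension one forces them to be global isomorphisms.

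The main technical obstacle is setting up the quasi-inverse $G$ so that the $R$-reflexivization commutes properly with the unit and counit maps; once that bookkeeping is in place, the argument reduces cleanly to Morita equivalence for matrix algebras over DVRs, which is standard. A minor secondary point is tracking left versus right $A$-module conventions in the definition of $F$, but the symmetry of $A$ renders this essentially immaterial for the equivalence.
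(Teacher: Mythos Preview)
The paper does not actually prove this proposition; it simply records it as a well-known fact with a reference to \cite[Proposition~2.4]{IR}. Your sketch is therefore more than the paper provides, and the strategy---checking everything at height-one primes, where $M$ becomes free and Morita theory applies, and then globalizing using that reflexive modules over a normal domain are determined in codimension one---is the standard route and is correct.

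One point of phrasing is worth tightening. When you write that the local trace isomorphisms ``assemble into a global $A$-bimodule map $A\to\Hom_R(A,R)$,'' this reads as though you are gluing maps defined only locally, which is not possible in general. What actually happens is that the trace pairing $(a,b)\mapsto\operatorname{tr}_K(ab)$ is globally defined over the fraction field $K$, giving a map $A\to\Hom_R(A,K)$; the height-one computation then shows its image lies in $\Hom_R(A,R)$ because $R=\bigcap_{\hei\pp=1}R_\pp$. Once the map exists globally, your argument that it is an isomorphism (both sides reflexive, isomorphism at all height-one primes) goes through. A similarly minor issue: your stated directions for the unit and counit are reversed relative to the adjunction $(-\otimes_AM)\dashv\Hom_R(M,-)$, but since you are only after natural isomorphisms between reflexive modules and you verify these in codimension one, this does not affect the argument.
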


Using this gives the following key observation.

\begin{proposition}\label{E and Sn}
Let $G$ be a finite abelian group, and $S$ a commutative strongly $G$-graded ring such that $R\colonequals S_0$ is a normal domain, and $S$ is a finitely generated $R$-module.  
\begin{enumerate}
\item\label{E and Sn 0} If $X\in\mod S$, then $X\in\CM S$ if and only if $X\in\CM R$.
\item\label{E and Sn 1} The functor $(-)_0\colon\refl^G\!S\to\refl R$ is an equivalence.
\item\label{E and Sn 2} If $N\in\refl^G\!S$, then there is an isomorphism of rings $\NCov{\End_S(N)}\cong\End_R(N)$, and an equivalence $\Mod^G\!\End_S(N)\simeq\Mod\End_R(N)$.
\item\label{E and Sn 3} If $N\in\refl^G\!S$, then there are inequalities 
\begin{align*}
\gl\End_R(N)&\le\gl\End_S(N)\\
\injdim_{\End_R(N)}\End_R(N)&\le\injdim_{\End_S(N)}\End_S(N).
\end{align*}
These are equalities if $R$ contains $\order{G}^{-1}$.
\end{enumerate}
\end{proposition}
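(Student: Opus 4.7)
Because $S$ is strongly $G$-graded with $S_0=R$, each piece $S_g$ is an $R$-invertible module, and so $S=\bigoplus_g S_g$ is a finite projective $R$-module of rank $|G|$; moreover, for any graded $S$-module $N$ the multiplication map $S_g\otimes_R N_0\to N_g$ is an isomorphism. These are the only structural inputs needed. For part \eqref{E and Sn 0}: finiteness and flatness of $S$ over the normal domain $R$, combined with going-up and going-down, give $\dim S_\qq=\dim R_\pp$ whenever $\pp=\qq\cap R$, together with $\Supp_R X=\{\qq\cap R:\qq\in\Supp_S X\}$. Combined with the standard equality $\depth_{R_\pp}X_\pp=\inf_\qq\depth_{S_\qq}X_\qq$, the infimum taken over primes $\qq$ of $S$ lying over $\pp$, the two Cohen--Macaulay conditions match pointwise.

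For \eqref{E and Sn 1} the plan is to define the quasi-inverse of $(-)_0$ by $X\mapsto S\otimes_R X=\bigoplus_g(S_g\otimes_R X)$, with the evident grading and $S$-action. Reflexivity transfers both ways, since $N_0$ is an $R$-direct summand of $N$ and tensoring with an invertible $S_g$ preserves reflexivity, so the two functors land in the claimed categories; the unit and counit are isomorphisms thanks to $S_g\otimes_R N_0\cong N_g$. For \eqref{E and Sn 2}, the equivalence in \eqref{E and Sn 1} is $R$-linear, hence for any $M\in\refl^G\!S$ it induces $\End_S^G(M)\cong\End_R(M_0)$. I apply this with $M=\bigoplus_{g\in G}N(g)$: its degree-zero piece is $\bigoplus_g N_g=N$ as an $R$-module, so $\End_S^G(\bigoplus_{g\in G}N(g))\cong\End_R(N)$, while Lemma \ref{describe cyclic covering} identifies the left-hand side with $\NCov{\End_S(N)}$. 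The module-category equivalence then follows from Proposition \ref{global 1}\eqref{global 1 1}.

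Finally, for \eqref{E and Sn 3}, set $A\colonequals\End_S(N)$. Proposition \ref{global 1}\eqref{global 1 2} applied to $A$, together with the isomorphism $\NCov{A}\cong\End_R(N)$ from \eqref{E and Sn 2}, immediately yields the two inequalities. The scalar $R$-action embeds $R$ into the centre of $A_0=\End_S^G(N)$, so the hypothesis $|G|^{-1}\in R$ gives $|G|^{-1}\in A_0$, and Proposition \ref{gl.dim of *}\eqref{gl.dim of * 2} upgrades both inequalities to equalities. The only step calling for any genuine care is \eqref{E and Sn 0}, namely checking that depth and dimension transfer cleanly across the finite flat extension $R\to S$; the remaining parts are essentially formal consequences of the covering-theory results of Section \ref{Gabriel} transported through the equivalence $(-)_0$.
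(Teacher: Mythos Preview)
Your argument rests on the claim that each graded piece $S_g$ is an invertible $R$-module, hence that $S$ is finite projective over $R$ and that $S_g\otimes_RN_0\to N_g$ is always an isomorphism. This would follow from the \emph{standard} definition of strongly graded ($S_gS_{-g}=S_0$ for all $g$), but the paper's definition is strictly weaker: it only asks that $S_h\to\Hom_R(S_g,S_{g+h})$ be an isomorphism. In the principal application, the canonical cover $S=\bigoplus_{i=0}^{n-1}\omega_R^i$ of a non-Gorenstein $\Q$-CY ring $R$, the piece $S_1=\omega_R$ is a rank-one reflexive module that is \emph{not} projective; indeed, were $S_1S_{n-1}=R$ then $\omega_R$ would be an invertible fractional ideal and $R$ would be Gorenstein. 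So $S$ is not flat over $R$ in general, and for $N=S(1)$ your multiplication map $S_1\otimes_RN_0\to N_1$ is the reflexification $\omega_R\otimes_R\omega_R\to(\omega_R\otimes_R\omega_R)^{**}$, typically not an isomorphism.

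This undermines your proofs of \eqref{E and Sn 0} and \eqref{E and Sn 1}. For \eqref{E and Sn 0} the repair is easy: the statement holds for any module-finite extension and flatness is irrelevant (the paper simply cites \cite[Exercise~1.2.26]{BH}). For \eqref{E and Sn 1} your quasi-inverse $S\otimes_R-$ need not land in reflexives and the unit/counit argument collapses. The paper instead passes through the chain
\[
\refl^G\!S\;\simeq\;\refl(\NCov{S})\;\cong\;\refl\End_R(S)\;\simeq\;\refl R,
\]
using Proposition~\ref{global 1}\eqref{global 1 1} and Lemma~\ref{End is G-covering}\eqref{End is G-covering 2} for the first two steps, Lemma~\ref{S R reflexive} (together with Lemma~\ref{End is G-covering}\eqref{End is G-covering 4} and Proposition~\ref{ref eq}, which show $S$ and $\End_R(S)$ are symmetric $R$-algebras) to restrict to reflexives, and Proposition~\ref{ref eq} for the last. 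This route only needs $S\in\refl R$, not $S$ projective. Your treatment of \eqref{E and Sn 2} and \eqref{E and Sn 3} via Lemma~\ref{describe cyclic covering} and Propositions~\ref{global 1} and \ref{gl.dim of *} is correct and coincides with the paper's, once \eqref{E and Sn 1} is secured.
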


\begin{proof}
(1) This is well known, see for example \cite[Exercise 1.2.26]{BH}.\\
(2) There are equivalences
\[
\Mod^G\!S
\stackrel{\scriptstyle\ref{global 1}\eqref{global 1 1}}{\simeq}\Mod(\NCov{S})
\stackrel{\scriptstyle\ref{End is G-covering}\eqref{End is G-covering 2}}{\simeq}\Mod\End_R(S).
\]
Since both $S$ and $\End_R(S)$ are symmetric $R$-algebras by Lemmas \ref{End is G-covering}\eqref{End is G-covering 4} and \ref{ref eq}, there is  an induced equivalence $\refl^G\!S\simeq\refl\End_R(S)$ by Lemma \ref{S R reflexive}\eqref{S R reflexive 2}. On the other hand, by Proposition \ref{ref eq}, there is an equivalence $\Hom_R(S,-)\colon\refl R\to\refl\End_R(S)$, whose inverse is given by $e(-)\colon\refl\End_R(S)\to\refl R$ for the idempotent $e$ corresponding to the summand $R$ of $S$.
Composing them gives equivalences $\refl^G\!S\simeq\refl\End_R(S)\simeq\refl R$.
It is easy to check that the composition coincides with $(-)_0$.\\
(3) Let $L\colonequals \bigoplus_{i\in G}N(i)\in\refl^G\!S$. Then as an $R$-module, $L_0=\bigoplus_{i\in G}N_i=N\in\refl R$, so it follows that 
$\End_R(N)\stackrel{\eqref{E and Sn 1}}{=}\End_S^G(L)\stackrel{\scriptstyle{\ref{describe cyclic covering}}}{=}\NCov{\End_S(N)}$, and so there is an equivalence $\Mod^G\!\End_S(N)\simeq\Mod\End_R(N)$ by Proposition \ref{global 1}\eqref{global 1 1}.\\
(4) The assertions follow from Propositions \ref{global 1}\eqref{global 1 2} and \ref{gl.dim of *}\eqref{gl.dim of * 2}.
\end{proof}

The following is the main result of this subsection.

\begin{theorem}\label{from S to R}
Let $G$ be a finite abelian group, $S$ a commutative strongly $G$-graded ring such that $R\colonequals S_0$ is a normal domain, $S$ is a finitely generated $R$-module and $N\in\refl^G\!S$. 
\begin{enumerate}
\item\label{from S to R 1} $N$ gives a modification of $S$ if and only if it gives a modification of $R$.
\item\label{from S to R 2} If $N$ gives a GM (respectively, NCCR, NCR) of $S$, then it gives a GM (respectively, NCCR, NCR) of $R$. The converse holds if $R$ contains $|G|^{-1}$.
\end{enumerate}
\end{theorem}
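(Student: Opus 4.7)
My plan is to combine the structural isomorphism $\End_R(N)\cong\NCov{\End_S(N)}$ from Proposition~\ref{E and Sn}(3) with the covering-theoretic comparisons of Propositions~\ref{global 1} and~\ref{gl.dim of *}, applied locally at primes of $R$.

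For part~(1), Proposition~\ref{global 1}(3) shows that $\NCov{\End_S(N)}$ is free of rank $|G|$ as a one-sided $\End_S(N)$-module, and hence by restriction of scalars $\End_R(N)\cong\End_S(N)^{\oplus|G|}$ as $S$-modules. Consequently $\End_R(N)\in\CM S$ if and only if $\End_S(N)\in\CM S$, and combining with Proposition~\ref{E and Sn}(1) yields the equivalence of the modification conditions over $R$ and over $S$.

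For part~(2), the first step is the structural observation that, being strongly $G$-graded with $S_0=R$, each homogeneous piece $S_i$ is an invertible $R$-module, so $S$ is locally free of rank $|G|$ over $R$. Hence for every $\pp\in\Spec R$, the localisation $S_\pp\colonequals S\otimes_RR_\pp$ is a semi-local free $R_\pp$-algebra whose maximal ideals correspond to the primes $\qq\in\Spec S$ over $\pp$, and each such $S_\qq$ satisfies $\dim S_\qq=\dim R_\pp$. Localising the isomorphism of Proposition~\ref{E and Sn}(3) gives $\End_R(N)_\pp\cong\NCov{\End_{S_\pp}(N_\pp)}$, so applying Propositions~\ref{global 1}(2) and~\ref{gl.dim of *}(2) locally yields
\[
\gl\End_R(N)_\pp\le\gl\End_{S_\pp}(N_\pp)\quad\text{and}\quad\injdim_{\End_R(N)_\pp}\!\End_R(N)_\pp\le\injdim_{\End_{S_\pp}(N_\pp)}\!\End_{S_\pp}(N_\pp),
\]
with equality in both whenever $|G|^{-1}\in R_\pp$.

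Finally, I would bridge the semi-local ring $\End_{S_\pp}(N_\pp)$ and the individual local rings $\End_{S_\qq}(N_\qq)$ via the standard fact that for a module-finite algebra over a semi-local noetherian ring, both the global and the self-injective dimension equal the maximum of the corresponding invariants at its maximal ideals. Combined with $\dim S_\qq=\dim R_\pp$ and part~(1), this converts the NCCR (respectively, GM, NCR) hypothesis for $N$ over $S$ into the matching upper bound (respectively, analogue for $\injdim$, or just finiteness) for $\gl\End_R(N)_\pp$, while the required lower bounds in the NCCR and GM cases come from a standard Auslander--Buchsbaum-type argument and the cited inequality \cite[Corollary~3.5(4)]{GN2} respectively. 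The converse direction under $|G|^{-1}\in R$ reverses this using the equality halves of Proposition~\ref{gl.dim of *}(2) together with the elementary $\gl T_\mathfrak{m}\le\gl T$ on localisations. The main delicate point is the semi-local-to-local bookkeeping and verifying that supports match along the finite extension $R\to S$.
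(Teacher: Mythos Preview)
Your approach is essentially the same as the paper's: localize at primes of $R$, use the isomorphism $\End_R(N)\cong\NCov{\End_S(N)}$ from Proposition~\ref{E and Sn}\eqref{E and Sn 2}, and apply the comparisons of Propositions~\ref{global 1}\eqref{global 1 2} and~\ref{gl.dim of *}\eqref{gl.dim of * 2}. The paper's proof simply reduces to $R$ local and invokes Proposition~\ref{E and Sn}\eqref{E and Sn 3} directly, writing $\injdim_{\End_S(N)}\End_S(N)=\dim S=\dim R$; your more explicit semi-local-to-local bridging is correct but not strictly necessary.

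One genuine error to flag: your claim that each $S_i$ is an invertible $R$-module, and hence that $S$ is locally free over $R$, is \emph{false} under the paper's definition of ``strongly $G$-graded'' (that $S_h\to\Hom_{R}(S_g,S_{g+h})$ is an isomorphism). This condition only forces the $S_i$ to be rank-one \emph{reflexive} $R$-modules with $S_{-i}\cong S_i^*$, not locally free; indeed in the main application $S_i=\omega_R^i$, which is typically not invertible. You have confused the paper's definition with the stronger standard one $S_gS_h=S_{g+h}$. Fortunately this does not damage your argument: the only facts you actually extract from it --- that $S_\pp$ is semi-local with maximal ideals lying over $\pp$ and that $\dim S_\qq=\dim R_\pp$ --- follow immediately from $S$ being module-finite over the normal domain $R$, with no projectivity needed. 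Replace the offending sentence with that observation and the proof stands.
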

\begin{proof}
(1) First, by Lemmas \ref{S R reflexive}\eqref{S R reflexive 2} and \ref{End is G-covering}\eqref{End is G-covering 4}, $N\in\refl R$.  The result then follows since
\[
\End_S(N)\in\CM S 
\stackrel{\scriptstyle\ref{E and Sn}\eqref{E and Sn 0}}{\iff}
\End_S(N)\in\CM R
\stackrel{\scriptstyle\ref{E and Sn}\eqref{E and Sn 2}}{\iff}
\End_R(N)\in\CM R.
\]
(2) Since GM, NCCR and NCR are local properties, we can assume that $R$ is local.

Suppose that $N$ gives a GM of $S$.  Then by \eqref{from S to R 1}, $\End_R(N)\in\CM R$ holds.
Hence by Proposition \ref{E and Sn}\eqref{E and Sn 3} and the assumption that $N\in\GM S$ we see the desired inequality
\[
\injdim_{\End_R(N)}\End_R(N)\leq \injdim_{\End_S(N)}\End_S(N)=\dim S.
\]

Similarly, if $N$ gives an NCCR of $S$, then again by \eqref{from S to R 1} $\End_R(N)\in\CM R$ holds.
Hence by Proposition \ref{E and Sn}\eqref{E and Sn 3} and the assumption that $N\in\NCCR S$ we see the desired inequality
\[
%\dim R\leq
\gl\End_R(N)\leq \gl\End_S(N)=\dim S.
\]

If $N$ gives an NCR of $S$, then the inequality $\gl\End_R(N)\leq \gl\End_S(N)$ from Proposition \ref{E and Sn}\eqref{E and Sn 3}  immediately shows that $N$ gives an NCR of $R$.

The converse to all three statements is obvious, since the inequalities in Proposition \ref{E and Sn}\eqref{E and Sn 3} are equalities when $R$ contains $|G|^{-1}$.
\end{proof}

\subsection{Canonical covers}\label{comm cover}
 In this subsection we briefly recall canonical covers, which is slightly subtle in our general setting since they need not be unique.

Let $R$ be a normal domain with quotient field $K$.
For a finite abelian group $G$ and a group homomorphism $\upsigma\colon G\to\Cl(R)$, where $\Cl(R)$ is the class group, this subsection constructs a strongly $G$-graded ring $S$ such that $S_0=R$ and $[S_i]=\upsigma(i)$ in $\Cl(R)$ for all $i\in G$.

We start by fixing a decomposition $G=\bigoplus_{a=1}^\ell\Z_{n_a}$.
For $a=1,\ldots,\ell$, we then choose a divisorial ideal $I_a$ of $R$ corresponding to the element $(0,\ldots,1,\ldots,0)\in G$.
Moreover, we fix $q=(q_1,\ldots,q_\ell)\in (K^\times)^\ell$, where each $q_a$ is chosen such that the multiplication map
\[
q_a\colon(I_a^{n_a})^{**}\cong R
\]
is an isomorphism. For $i=(i_1,\ldots,i_\ell)\in\Z^{\ell}$, we will write
\[
S_i\colonequals (I_1^{i_1}\cdots I_\ell^{i_\ell})^{**},
\]
which is a divisorial ideal of $R$. Using the natural map $\Z^\ell\to G$, we identify $G$ with a subset $J$ of $\Z^\ell$, namely
\begin{equation}\label{identify}
J\colonequals \prod_{a=1}^\ell\{0,1,2,\ldots,n_a-1\}\cong G,
\end{equation}
and consider the $J$-graded (and hence $G$-graded) $R$-module
\[
S\colonequals \bigoplus_{i\in J}S_i.
\]
Using the isomorphisms $q_a$, we will next define a multiplication on $S$, which makes $S$ into a $G$-graded ring.  To fix notation, for $i,j\in\Z^\ell$, let
\[
q(i)\colonequals \prod_{a=1}^\ell q_a^{\lfloor\frac{i_a}{n_a}\rfloor}\in K^\times\ \mbox{ and }\ 
q(i,j)\colonequals \frac{q(i+j)}{q(i)q(j)}\in K^\times.
\]
Then for $x\in S_i$ and $y\in S_j$, the product $x\cdot_q y$ is defined by
\begin{equation}\label{product}
x\cdot_qy\colonequals q(i,j)xy\in K,
\end{equation}
where the right hand side is the multiplication in $K$. Then this belongs to $S_{(i+j)_J}$, where $(i+j)_J$ is the element of $J$ corresponding to $i+j\in G$ by the bijection \eqref{identify}.
In fact $xy\in I^{i+j}$ holds, and therefore we have $q(i,j)xy\in I^{(i+j)_J}=S_{i+j}$.

The formula \eqref{product} gives a multiplication on $S$. In fact we only have to check associativity, which follows from a general formula
\[
q(i,j)q(i+j,k)=q(i,j+k)q(j,k)
\]
for all $i,j,k\in \Z^\ell$.

The following is the main result in this subsection, which is used also in \cite{IN}.

\begin{proposition}\label{construct S}
Let $R$ be a normal domain, $\upsigma\colon G\to\Cl(R)$ a group homomorphism where $G$ is a finite abelian group, and $S$ the ring constructed above. Then the following statements hold.
\begin{enumerate}
\item\label{construct S 1} $S$ is strongly $G$-graded, with $S_0=R$ and $[S_i]=\upsigma(i)$ in $\Cl(R)$ for all $i\in G$.
\item\label{construct S 2} There is an isomorphism $\End_R(S)\cong \NCov{S}$, as $R$-algebras.
\item\label{construct S 3} For any $i\in G$, there is an isomorphism $\Hom_R(S,S_i)\cong S$ of $S$-modules.
\item\label{construct S 4} If $R$ is local (respectively, complete local) and $\upsigma$ is injective, then $S$ is local (respectively, complete local).
\end{enumerate}
\end{proposition}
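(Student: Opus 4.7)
The plan is to derive everything from part (1), which is the core computation. Parts (2) and (3) then follow formally from Lemma~\ref{End is G-covering}, while (4) requires a separate and more delicate argument.

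For (1), both $S_0 = R$ and $[S_i] = \upsigma(i)$ are immediate from the construction: $q(0) = 1$ gives $S_0 = R$, and $[(I_1^{i_1}\cdots I_\ell^{i_\ell})^{**}] = \sum_a i_a [I_a] = \upsigma(i)$ by the choice of the $I_a$. The substantive content is the strongly graded property. Using the standard identification, in a normal domain, of $R$-module maps between divisorial ideals with fractional ideal quotients in $K$, one computes
\[
\Hom_R(S_g, S_{g+h}) \cong (S_{(g+h)_J} :_K S_g) = (I^{(g+h)_J - g})^{**} = q(nr)\,S_h,
\]
where $r \in \{0,1\}^\ell$ records the carries when reducing $g+h \in \Z^\ell$ to the fundamental domain $J$. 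The canonical structure map $x \mapsto (y \mapsto y \cdot_q x)$ corresponds in $K$ to multiplication by $q(g,h) = q(nr)$, which is visibly an isomorphism $S_h \xrightarrow{\sim} q(nr)\, S_h$.

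Parts (2) and (3) are then immediate: given strong grading from (1), Lemma~\ref{End is G-covering}\eqref{End is G-covering 2} yields the ring isomorphism $\End_R(S) \cong \NCov{S}$, while Lemma~\ref{End is G-covering}\eqref{End is G-covering 3} applied with $g = i$ gives $\Hom_R(S, S_i) \cong S$ as $S$-modules.

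For (4), the completeness statement is automatic once locality is known, since $S$ is finitely generated over the complete ring $R$. Locality itself is the main content: $S$ is finite over local $R$, hence semilocal, so its maximal ideals are in bijection with those of $\bar S := S/\mm S$, a finite-dimensional $k$-algebra where $k := R/\mm$. The natural candidate for the unique maximal ideal is the image of $\mathfrak{m}_S := \mm \oplus \bigoplus_{i \neq 0} S_i$; showing $\mathfrak{m}_S$ is an ideal reduces, via the twisted multiplication of (1), to the assertion that $q(nr) I^{nr} \subset \mm$ for every $r \in \{0,1\}^\ell \setminus \{0\}$, equivalently that $I^{nr}$ is non-reflexive. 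The main obstacle is establishing this non-reflexivity: injectivity of $\upsigma$ forces $[I_a]$ to have order exactly $n_a$ in $\Cl(R)$, which together with standard facts on products of divisorial ideals in normal local domains precludes $I^{nr}$ from being principal. Granting this, $\bar S/\bar{\mathfrak{m}}_S \cong k$ is a field, so $\bar{\mathfrak{m}}_S$ is maximal; uniqueness then follows because for any other maximal ideal $\mathfrak{N}$ of $\bar S$, the subset $T := \{i \in G : \bar S_i \not\subset \mathfrak{N}\}$ is a subgroup of $G$ (as $\mathfrak{N}$ is prime and $G$ is finite), and the non-reflexivity forces the multiplication $\bar S_i \otimes \bar S_{-i} \to \bar S_0 = k$ to vanish for every $i \in T \setminus \{0\}$, contradicting $i, -i \in T$ unless $T = \{0\}$, i.e., $\mathfrak{N} = \bar{\mathfrak{m}}_S$.
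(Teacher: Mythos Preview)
Parts (1)--(3) are correct and essentially match the paper's proof: the explicit computation of $\Hom_R(S_g,S_{g+h})$ via colon ideals is just a more detailed version of the paper's one-line observation that $(S_i\cdot_q S_j)^{**}=S_{i+j}$, and (2), (3) are reduced to Lemma~\ref{End is G-covering} in both cases.

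Part (4) contains a genuine gap. The crux is the assertion $S_i\cdot_q S_{-i}\subset\mm$ for $i\neq 0$, which you need both for $\mathfrak m_S$ to be an ideal and for the vanishing $\bar S_i\bar S_{-i}=0$ in your uniqueness argument. Your reduction of this to ``$q(nr)I^{nr}\subset\mm$, equivalently $I^{nr}$ non-reflexive'' is incorrect: in $K$ one has
\[
S_i\cdot_q S_{-i}=q(nr)\,(I^i)^{**}(I^{(-i)_J})^{**},
\]
and since $S_i=(I^i)^{**}$ rather than $I^i$, this ideal is in general strictly larger than $q(nr)I^{nr}$. Thus showing $I^{nr}$ non-reflexive is a weaker statement and does not imply what you need. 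The correct argument is in fact shorter than what you wrote: if $S_i\cdot_q S_{-i}=R$, then $S_i$ is an invertible $R$-module, hence free since $R$ is local, so $\upsigma(i)=[S_i]=0$ and injectivity forces $i=0$. (The paper simply asserts $S_iS_{-i}\subset\mm$ without further comment, presumably for exactly this reason.)

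Once that fact is in hand, your subgroup-$T$ argument for uniqueness is valid but unnecessarily elaborate. The paper instead observes that, since each $S_iS_{-i}\subset\mm$, a suitable power of $\nn$ lies in $\mm S$; as every maximal ideal of $S$ contains $\mm S$, primeness forces it to contain $\nn$, so $\nn$ is the unique maximal ideal. This also immediately gives that the $\nn$-adic and $\mm S$-adic topologies coincide, yielding the completeness statement.
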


\begin{proof}
(1) By our construction, $(S_i\cdot_q S_j)^{**}=S_{i+j}$ holds for all $i,j\in G$. Thus the map $S_j\to\Hom_R(S_i,S_{i+j})$ sending $x\in S_j$ to $(y\mapsto yx)$ is an isomorphism.\\
(2) This is immediate from \eqref{construct S 1} and Lemma \ref{End is G-covering}\eqref{End is G-covering 2}.\\
(3) This is Lemma \ref{End is G-covering}\eqref{End is G-covering 3}.\\
%Let $(-)_i\colon S\to S_i$ be the natural projection. This gives a morphism $S\to\Hom_R(S,S_i)$ of $S$-modules sending $x\in S$ to $(y\mapsto (yx)_i)\in\Hom_R(S,S_i)$, which is an isomorphism since $S$ is strongly $G$-graded.\\
(4) Assume that $(R,\mm)$ is a local ring.
We first show that $S$ is a local ring with  maximal ideal $\nn\coloneqq\mm\oplus(\bigoplus_{0\neq i\in G}S_i)$. 
Clearly $\nn$ is a maximal ideal. Since $S_iS_{-i}\subset\mm$ holds for any $0\neq i\in G$, it follows that some power of $\nn$ is contained in $\mm S$ and hence in all maximal ideals of $S$. Therefore $\nn$ is a unique maximal ideal of $S$.
If $(R,\mm)$ is complete, then $S$ is complete with respect to the $\mm S$-adic topology, and hence it is also complete with respect to the $\nn$-adic topology.
\end{proof}

Since the ring structure of $\End_R(S)$ uses only the $R$-module structure of $S$, it follows from Proposition \ref{construct S}\eqref{construct S 2} that the ring structure of $\NCov{S}$ does not depend on the choice of an element $q\in(K^\times)^\ell$.

\begin{definition}\label{cancov}
When $R$ is $\mathds{Q}$-CY, consider the group homomorphism $\upsigma\colon \Zn\to\Cl(R)$ defined by $i\mapsto\omega_R^i$. Consider an embedding $\omega_R\to R$ and an isomorphism $q\colon(\omega_R^n)^{**}\cong R$ given by multiplication by an element $q\in K^\times$. 
Applying the above construction gives a \emph{canonical cover} of $R$, defined by
\[
S_q=S=\bigoplus_{i=0}^{n-1}\omega_R^i.
\]
\end{definition}

Note that there is a freedom of the choice of $q$ up to an element in $R^\times$. In general, $S_q$ depends on the choice of $q$, see \cite[Example 1.6]{TW}. We note the following, although we do not use it in this paper.

\begin{lemma}
With the assumptions as above, the following hold.
\begin{enumerate}
\item\label{indep 1} If $q=r^nq'$ for some $r\in R^\times$, then $S_q\cong S_{q'}$ as $\Zn$-graded $R$-algebras.
\item If $R$ is a complete local ring with algebraically closed residue field of characteristic zero, then $S_q$ is independent of the choice of $q$.
\end{enumerate}
\end{lemma}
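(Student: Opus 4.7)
The plan is to settle (1) by a simple graded-rescaling isomorphism and to deduce (2) from (1) via Hensel's lemma.

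For part (1), I would construct the candidate isomorphism $\phi\colon S_q\to S_{q'}$ that acts on the $i$-th graded piece $\omega_R^i$ as multiplication by $r^i\in R^\times$, for each $0\le i\le n-1$. Since the underlying graded $R$-modules of $S_q$ and $S_{q'}$ both equal $\bigoplus_{i=0}^{n-1}\omega_R^i$ and each $r^i$ is a unit, $\phi$ is tautologically an isomorphism of graded $R$-modules, and the only substantive point is multiplicativity. Using that $q(i)=q'(i)=1$ for $0\le i\le n-1$, the factors $q(i,j)=q(i+j)/(q(i)q(j))$ and $q'(i,j)$ simplify to $1$ when $i+j<n$, and to $q$ respectively $q'$ when $n\le i+j<2n$. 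The case $i+j<n$ is immediate; in the remaining case the equality $\phi(x\cdot_q y)=\phi(x)\cdot_{q'}\phi(y)$ reduces to the single relation $r^{i+j-n}q=r^{i+j}q'$, which is exactly the hypothesis $q=r^nq'$.

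For part (2), two valid choices $q,q'\in K^\times$ both determine isomorphisms $(\omega_R^n)^{**}\cong R$, hence differ by a unit $u\colonequals q/q'\in R^\times$. By (1) it suffices to exhibit $r\in R^\times$ with $u=r^n$. I would apply Hensel's lemma to the monic polynomial $f(T)=T^n-u\in R[T]$. In characteristic zero the derivative $f'(T)=nT^{n-1}$ vanishes only at $T=0$, so every root of $\bar f(T)=T^n-\bar u$ in the residue field $k$ is automatically simple; since $k$ is algebraically closed and $\bar u\ne 0$, such a root exists. Because $R$ is complete local, Hensel's lemma lifts it to a root $r\in R$ of $f$, which is a unit as $\bar r\ne 0$. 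Then $q=r^nq'$, and part (1) supplies the desired isomorphism $S_q\cong S_{q'}$.

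The only step requiring any care is the multiplicativity check in (1), since the products $\cdot_q$ and $\cdot_{q'}$ are twisted; beyond that the argument is a direct appeal to Hensel's lemma, so no serious obstacle is anticipated.
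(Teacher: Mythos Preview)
Your proposal is correct and follows essentially the same approach as the paper: the same rescaling map $a_i\mapsto r^ia_i$ for part (1), and the reduction of part (2) to extracting an $n$-th root of the unit $u=q/q'$. The only cosmetic difference is that the paper extracts this root via the binomial (Taylor) series for $(1+r)^{1/n}$ after normalizing $u$ to lie in $1+\mm$, whereas you invoke Hensel's lemma directly; both arguments use completeness, characteristic zero, and the algebraically closed residue field in the same way.
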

\begin{proof}
(1) It is easy to check that the map
\begin{eqnarray*}
S_q&\xrightarrow{}&S_{q'}\\
(a_i)_{0\le i<n}&\mapsto&(a_ir^i)_{0\le i<n}
\end{eqnarray*}
gives the desired isomorphism.\\
(2) We need to show that $qq'^{-1}\in R^\times$ has an $n$-th root in $R^\times$. Since the residue field is algebraically closed, we can assume that $qq'^{-1}=1+r$ for some $r\in\mm$. Then it has an $n$-th root given by the Taylor expansion since $R$ contains $\Q$.
\end{proof}

The following properties are well known.

\begin{proposition}\label{gorensteiness}
Let $R$ be a CM normal domain which is $\Q$-CY of index $n$, and $S$ a canonical cover of $R$.
\begin{enumerate}
\item\label{gorensteiness 1} $S$ is a Gorenstein ring if and only if $S\in\CM R$.
\item\label{gorensteiness 2} If $R$ contains a field of characteristic zero, then $S$ is a normal domain.
\end{enumerate}
\end{proposition}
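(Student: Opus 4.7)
I would prove the two parts separately.

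For (1), the plan is to establish the unconditional isomorphism $\omega_S \cong S$ and read off the Gorenstein property from Cohen--Macaulayness. Since $\omega_R = S_1$, Proposition~\ref{construct S}(3) gives $\Hom_R(S,\omega_R) = \Hom_R(S,S_1) \cong S$ as $S$-modules. When $S$ is Cohen--Macaulay, $\Hom_R(S,\omega_R)$ is the canonical module of $S$, so $\omega_S \cong S$ automatically. Then Proposition~\ref{E and Sn}(1) applied to $X = S$ identifies $S \in \CM S$ with $S \in \CM R$, so the Gorenstein property of $S$ (Cohen--Macaulay with self-dual canonical) reduces exactly to $S \in \CM R$.

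For (2), the plan is to verify Serre's criterion $R_1 + S_2$ for normality of the ring $S$ and then upgrade to the domain property via connectedness of the étale locus. First, $S = \bigoplus_{i=0}^{n-1} \omega_R^i$ is a finite direct sum of divisorial ideals of $R$, hence reflexive as an $R$-module, hence $S_2$ as an $R$-module because $R$ is a normal CM domain. Since the fibres $S_\qq/\pp S_\qq$ are artinian, depth measured over $R$ and over $S$ agree, so $S$ is also $S_2$ as a ring.

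For $R_1$, I would let $U \subset \Spec R$ be the Gorenstein locus; its complement has codimension $\geq 2$ since $R$ is normal (every codim $\leq 1$ point of $R$ is regular, hence Gorenstein). On $U$ the sheaf $\omega_R$ is invertible, and the construction presents $S|_U$ locally as $\mathcal{O}_U[y]/(y^n - u)$ with $u$ a local unit coming from the isomorphism $(\omega_R^n)^{**} \cong R$. Characteristic zero makes $n$ a unit in $R$, so the Jacobian $ny^{n-1}$ is invertible wherever $y^n = u$, and $S|_U$ is finite étale over $U$. Since $\pi\colon \Spec S \to \Spec R$ is finite, $V := \pi^{-1}(U)$ has complement of codimension $\geq 2$ in $\Spec S$, so every height $1$ prime $\qq$ of $S$ lies in $V$. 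By flatness of étale extensions, $\dim S_\qq = \dim R_{\qq\cap R}$, forcing $\qq \cap R$ to have height $1$; then $R_{\qq\cap R}$ is a DVR, and $S_\qq$ étale over a DVR is regular. Combined with $S_2$, this gives $S$ normal by Serre.

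Finally, the domain property will follow once I show $V$ is connected. The étale cover $V \to U$ is a $\mu_n$-torsor classified by $[\omega_R|_U] \in \Pic(U)$ and is connected if and only if this class has order exactly $n$. If instead $\omega_R^d|_U \cong \mathcal{O}_U$ for some proper divisor $d \mid n$, then since both sides are reflexive and the complement of $U$ has codimension $\geq 2$, this isomorphism would extend uniquely to a global $\omega_R^d \cong R$, contradicting minimality of the index $n$. Hence $V$ is connected, and being locally a domain it is an integral normal scheme; then the $S_2$-property and the codimension condition give $S = \Gamma(V, \mathcal{O}_V)$, so $S$ is a normal domain. The main obstacle is this last connectedness step, where both the minimality of $n$ and the characteristic zero hypothesis (which guarantees étaleness of the $\mu_n$-cover) enter crucially.
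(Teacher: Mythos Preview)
For part~(1), your argument is essentially identical to the paper's: use Proposition~\ref{E and Sn}\eqref{E and Sn 0} to identify $S\in\CM R$ with $S\in\CM S$, identify $\Hom_R(S,\omega_R)$ with the canonical module of $S$ when $S$ is CM (via \cite[Theorem~3.3.7]{BH}, after reducing to the local case), and invoke Proposition~\ref{construct S}\eqref{construct S 3} to see this is isomorphic to $S$.

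For part~(2), the paper gives no argument of its own and simply cites \cite[Proposition~1.12]{TW}. Your direct proof via Serre's criterion---$S_2$ from reflexivity over $R$, $R_1$ by showing the cover is \'etale over the Gorenstein locus using the local presentation $\mathcal{O}_U[y]/(y^n-u)$, and the domain property from connectedness of the \'etale locus---is the standard route and is essentially correct. Two small points are worth tightening. First, the assertion that $V\to U$ is connected \emph{if and only if} $[\omega_R|_U]$ has order exactly $n$ in $\Pic(U)$ is not a true biconditional (the torsor class in $H^1(U,\mu_n)$ also records the trivialisation of $\omega_R^n$, so connectedness can persist when the Picard image has smaller order); but you only use the ``if'' direction, which is valid: if $V$ had $m>1$ components, the stabiliser $\mu_{n/m}$ of one component would exhibit $[V]$ as induced from $H^1(U,\mu_{n/m})$, forcing $[\omega_R|_U]\in\Pic(U)[n/m]$. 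Second, your claim that $V$ is ``locally a domain'' is not justified by regularity, since $U$ is only the Gorenstein locus; rather, $V$ is \'etale over the normal scheme $U$, hence normal, hence locally integral, and that is what you need.
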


\begin{proof}
(1) We may assume that $R$ is local, whence so is $S$ by Proposition \ref{construct S}\eqref{construct S 4}. By Proposition \ref{E and Sn}\eqref{E and Sn 0}, we have that $S$ is a CM $R$-module if and only if $S$ is a CM ring. Further $\Hom_R(S,\omega_R)$ is a canonical module of $S$ \cite[Theorem 3.3.7]{BH}, and this is isomorphic to $S$ by Proposition \ref{construct S}\eqref{construct S 3}.\\
(2) This is \cite[Proposition 1.12]{TW}.
\end{proof}

%Then $\omega_R^i$ for any $i\ge0$ can be regarded as an ideal of $R$. There exists an element $q$ in the quotient field $K$ of $R$ such that the multiplication map gives an isomorphism
%\[q:\omega_R^n\xrightarrow{\sim}R.\]
%Now we define a \emph{cyclic covering} of $R$ by
%\[S=S_q\colonequals \bigoplus_{i=0}^{n-1}\omega_R^i\]
%where the multiplication $\cdot_q$ is given as follows: For an integer $i\in\Z$, we denote by $(i)_n$ the unique integer satisfying $i-(i)_n\in n\Z$ and $0\le (i)_n<n$. Let $q(i)\colonequals 1$ if $i<n$, and $q(i)\colonequals q$ if $i\ge n$. The map $\omega_R^i\times\omega_R^j\to\omega_R^{i+j}$
%\[a\cdot_qb\colonequals q((i)_n+(j)_n)ab.\]
%defines a muptiplication $\cdot_q$ on $S$.

\section{Gorenstein Modifications and $\mathds{Q}$-Gorenstein Rings}

In this section we develop the notion of Gorenstein modifications, and relate these (and NCCRs) to $\mathds{Q}$-Gorenstein rings.  The main applications are given in Section \ref{app section}.

\subsection{Determinant}
When $R$ is a normal domain, recall that the determinant of a finitely generated module $M$ of positive rank $n$, denoted by $\det M$, is given by $\bigwedge^n\! M$.
It represents an element in the class group of $R$. As usual, we also write $[M]$ for the attached divisor of $M$, see  \cite[VII \textsection 4]{Bour}.

\begin{proposition}\label{det}
Suppose $M,N\in\mod R$ have positive ranks such that $M$ is torsion-free. Then
\begin{enumerate}
\item\label{det 1} $[\det(M\otimes_R N)] = (\rk M)[\det N] + (\rk N)[\det M]$.
\item\label{det 2} $[\det\Hom_R(M, N)] = (\rk M)[\det N] - (\rk N)[\det M]$.
\item\label{det 3} $[\det\End_R(M)] = 0$.
\end{enumerate}
\end{proposition}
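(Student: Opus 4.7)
The plan is to verify each identity in $\Cl(R)$ by testing divisors at each height-one prime $\mathfrak{p}$, where $R_\mathfrak{p}$ is a DVR and torsion-free finitely generated modules become free.

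For (1), I would first reduce to the case where $N$ is also torsion-free. Tensoring the short exact sequence $0\to N_{\rm tors}\to N\to N/N_{\rm tors}\to 0$ with the torsion-free module $M$, the kernel of $M\otimes_R N\twoheadrightarrow M\otimes_R (N/N_{\rm tors})$ is a quotient of the torsion module $M\otimes_R N_{\rm tors}$. Since the attached divisor of any rank-$r$ module agrees with that of its torsion-free quotient (both being read off from the reflexive hull of $\bigwedge^r$), this yields $[\det(M\otimes_R N)]=[\det(M\otimes_R(N/N_{\rm tors}))]$ and $[\det N]=[\det(N/N_{\rm tors})]$, so I may assume both $M$ and $N$ are torsion-free. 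Then I localize at a height-one prime $\mathfrak{p}$, where $M_\mathfrak{p}$ and $N_\mathfrak{p}$ are free over the DVR $R_\mathfrak{p}$. Fixing bases of $M_K=K^m$ and $N_K=K^n$, the lattices $M_\mathfrak{p}$, $N_\mathfrak{p}$ are specified by invertible matrices $A\in\mathrm{GL}_m(K)$, $C\in\mathrm{GL}_n(K)$, and the $\mathfrak{p}$-adic valuations of $[\det M]$ and $[\det N]$ are $v_\mathfrak{p}(\det A)$ and $v_\mathfrak{p}(\det C)$. The lattice $M_\mathfrak{p}\otimes_{R_\mathfrak{p}}\!N_\mathfrak{p}$ is then specified by the Kronecker product $A\otimes C$, and the classical identity $\det(A\otimes C)=(\det A)^n(\det C)^m$ gives (1) prime by prime.

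For (2), the same reduction applies: $\Hom_R(M, N_{\rm tors})$ is torsion and $\Ext_R^1(M, N_{\rm tors})$ vanishes at every height-one prime (since $M$ is locally free there), giving $[\det\Hom_R(M,N)]=[\det\Hom_R(M, N/N_{\rm tors})]$. Once both $M$ and $N$ are torsion-free, the lattice $\Hom_{R_\mathfrak{p}}(M_\mathfrak{p}, N_\mathfrak{p})\subset\Hom_K(M_K, N_K)$ is the image of $\mathrm{Mat}_{m\times n}(R_\mathfrak{p})$ under $X\mapsto A^{-1}XC$, whose top-exterior-power determinant is $(\det A)^{-n}(\det C)^m$, yielding (2). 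Part (3) is immediate from (2) by setting $N=M$.

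The only technical subtlety is the initial step stripping torsion from $N$, since the naive additivity $[\det Y]=[\det X]+[\det Z]$ can fail across a short exact sequence $0\to X\to Y\to Z\to 0$ when a term is torsion; past that reduction, all three identities reduce to classical multilinear-algebra computations with matrices over a DVR, and signs are irrelevant since the output lives in $\Cl(R)$.
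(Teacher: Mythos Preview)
Your approach differs from the paper's. The paper proves (1) by fixing a short exact sequence $0\to F\to N\to N'\to 0$ with $F$ free of rank $\rk N$ and $N'$ torsion, tensoring with the torsion-free $M$, and then invoking additivity of the attached divisor together with the length formula $[\det T]=\sum_{\hei\mathfrak{p}=1}\len_{R_\mathfrak{p}}(T_\mathfrak{p})[\mathfrak{p}]$ for the torsion piece. For (2) it reduces to (1) via the height-one isomorphism $M^*\otimes_RN\to\Hom_R(M,N)$ and a separate calculation $[\det M^*]=-[\det M]$ obtained by dualizing an analogous sequence. Your route, by contrast, strips torsion from $N$ and then works prime by prime over a DVR using the Kronecker-product determinant identity; once the reduction is granted this is cleaner and more elementary, and it treats (1) and (2) uniformly without passing through $M^*$.

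There is, however, a convention issue in your reduction step. You claim $[\det N]=[\det(N/N_{\text{tors}})]$ on the grounds that both are read off from the reflexive hull of $\bigwedge^r$, and you remark that additivity of $[\det]$ can fail across a short exact sequence with a torsion term. But the Bourbaki attached divisor that the paper uses \emph{is} additive on short exact sequences, precisely because torsion modules contribute $\sum\len(T_\mathfrak{p})[\mathfrak{p}]$ rather than zero; under that convention your equality $[\det N]=[\det(N/N_{\text{tors}})]$ fails whenever $N_{\text{tors}}$ has support in codimension one (e.g.\ $N=R\oplus R/\mathfrak{p}$). So your reduction establishes the proposition only under the reflexive-hull reading of $[\det]$, not the additive one the paper's own proof employs. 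The identities (1)--(3) happen to hold under either convention, and every application later in the paper is to reflexive modules, where the two conventions coincide; nothing downstream is at risk, but you should be aware of the discrepancy.
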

\begin{proof}
This is \cite[VII, \textsection 4, Exercise 8(d)]{Bour}. We give a proof for convenience of the reader.\\
(1)  There is a short exact sequence
\[
0 \to F \to N \to N' \to 0 
\]
where $F$ is free of same rank as $N$, and $N'$ is torsion. Tensoring  with $M$, noting that $\Tor_1^R(M, N')$ is torsion and $M\otimes_RF$ is either $0$ or torsion-free, gives an exact sequence
\[
0 \to M\otimes_R F \to  M\otimes_RN \to M\otimes_RN' \to 0
\]
The determinant of the leftmost module is $(\rk N)[\det M]$. For the rightmost  module, note that the determinant of a torsion module $T$ can be calculated by
\[
[\det T] =\sum_{\hei \pp=1} \len_{R_{\pp}}(T_{\pp})[\pp] 
\]
When one localizes at any such primes, the length of $(M\otimes N')_{\pp}$ would be exactly $(\rk M)(\len{N'_{\pp}})$, so $[\det(M\otimes_R N')] = (\rk M)[\det N'] = (\rk M)[\det N]$. \\
(2) The map $M^*\otimes_R N \to \Hom_R(M,N)$ is an isomorphism in height one, so the modules have the same determinant. Thus it suffices to show that $[\det M^*] = -[\det M]$.  Dualizing the exact sequence
$0 \to F \to M \to M' \to 0$ with a free module $F$ and a torsion module $M'$ gives an exact sequence
\[
0 \to M^* \to F^* \to \Ext_R^1(M',R) \to \Ext_R^1(M,R) \to 0.
\]
As $\Ext_R^1(M,R)$ is supported in height at least $2$, we have \[
[\det(M^*)] = -[\det\Ext_R^1(M',R)] =-[\det M'] =-[\det M],
\]
where the second equality is proved by localizing at height one primes.\\
(3) Immediate from \eqref{det 2}.
\end{proof}

The above proposition fails if we drop the assumption that $M$ is torsion-free.

\subsection{NCCRs and $\mathds{Q}$-Gorenstein}\label{Sect 3.2}

Recall from the introduction that for $M\in\refl R$ with $A:=\End_R(M)$, $A$ is a \emph{modification of $R$} if $A\in\CM R$, and a \emph{Gorenstein modification of $R$} if $A\in\CM R$ and $\injdim_{A\otimes_RR_\pp}(A\otimes_RR_{\pp})=\dim R_{\pp}$ for all $\pp\in\Supp_RA$. It is clear that giving an NCCR is equivalent to giving GM and NCR at the same time. Therefore the following implications hold
\[
\mbox{NCR}\Longleftarrow
\mbox{NCCR}
\Longleftrightarrow
\mbox{GM+NCR}
\Longrightarrow
\mbox{GM}
\Longrightarrow\\
\mbox{Modification},
\]
where in general all implications are strict.  However,  if $R$ is a Gorenstein normal domain, then any modification of $R$ is GM \cite[Proposition 2.21, Lemma 2.22]{IW1}. 

With reasonable conditions on $R$, we record in the next lemma known alternative characterisations.

\begin{lemma}\label{Goren char}
Suppose that $R$ is a CM ring with a canonical module $\omega_R$, $M\in\refl R$ and $A:=\End_R(M)$.  Then the following conditions are equivalent.
\begin{enumerate}
\item $A$ is a Gorenstein modification of $R$.
\item $A\in\CM R$ and $\Hom_R(A,\omega_R)\in\proj A$.
\item $A\in\CM R$ and $\Hom_R(A,\omega_R)\in\proj A^{\op}$.
\item $A^{\op}$ is a Gorenstein modification of $R$.
\end{enumerate}
\end{lemma}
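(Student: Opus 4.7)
Since all four conditions localize at primes of $\Supp_R A$, I would first reduce to the case that $(R,\mm)$ is local of dimension $d$; then write $\omega_A\colonequals\Hom_R(A,\omega_R)$ and $(-)^{\vee}\colonequals\Hom_R(-,\omega_R)$. Under the standing hypothesis $A\in\CM R$, the preparatory step is to verify that $\omega_A$ behaves as a dualizing bimodule: $\omega_A\in\CM R$ since $(-)^\vee$ is a self-duality on $\CM R$; the evaluation map $A\to\omega_A^{\vee}$ is an isomorphism of $(A,A)$-bimodules by reflexivity of objects of $\CM R$; and both $\injdim_A\omega_A\le d$ and $\injdim_{A^{\op}}\omega_A\le d$ hold, obtained by applying $\Hom_R(A,-)$ to a minimal injective resolution of $\omega_R$ and noting, via tensor-Hom adjunction, that $\Hom_R(A,I)$ is $A$-injective whenever $I$ is $R$-injective. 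Consequently $(-)^\vee$ restricts to a contravariant equivalence between the natural CM subcategories of $\mod A$ and $\mod A^{\op}$, interchanging $A$ and $\omega_A$.

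I would then prove the equivalence by running the cycle $(1)\Rightarrow(2)\Rightarrow(4)\Rightarrow(3)\Rightarrow(1)$. The duality implications $(2)\Rightarrow(4)$ and $(3)\Rightarrow(1)$ are short: if $\omega_A$ is a direct summand of $A^{\oplus n}$ as a left $A$-module, then $(-)^\vee$ makes $A\cong\omega_A^\vee$ a direct summand of $\omega_A^{\oplus n}$ as a right $A$-module, giving $\injdim_{A^{\op}}A\le\injdim_{A^{\op}}\omega_A\le d$; combined with the standing inequality $\injdim_{A^{\op}}A\ge d$ cited in the introduction, this forces equality, which is $(4)$. The implication $(3)\Rightarrow(1)$ follows by the same argument with $A$ and $A^{\op}$ interchanged.

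The Auslander-Buchsbaum steps $(1)\Rightarrow(2)$ and $(4)\Rightarrow(3)$ carry the bulk of the work. Assuming $(1)$, the target is $\omega_A\in\proj A$; since $\omega_A, A\in\CM R$ both have depth $d$ over $R$, the Auslander-Buchsbaum formula for module-finite $R$-algebras reduces this to showing $\pd_A\omega_A<\infty$. For finiteness my plan is to exploit the dualizing-bimodule structure of $\omega_A$: it has finite injective dimension on both sides and satisfies $\End_A(\omega_A)=A=\End_{A^{\op}}(\omega_A)$, so $\RHom_A(-,\omega_A)$ is a contravariant duality between appropriate bounded derived subcategories that sends $A\leftrightarrow\omega_A$; under this duality, the finite one-sided injective dimension of $A$ provided by $(1)$ translates into $\omega_A$ being a perfect complex, hence of finite projective dimension. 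Identifying the duality explicitly and verifying this translation is the main obstacle; an alternative route is a direct syzygy argument, using that the syzygies of $\omega_A$ in a minimal projective $A$-resolution remain in $\CM R$ by the depth lemma, together with the identity $\RHom_A(\omega_A,\omega_A)=A$ from the preparatory step, to force vanishing of $\Ext^{>0}_A(\omega_A,A)$ and hence projectivity.
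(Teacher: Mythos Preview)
The paper does not argue this lemma from scratch: it defers $(1)\Leftrightarrow(2)$ and $(3)\Leftrightarrow(4)$ to Goto--Nishida \cite[Proposition~1.1(3)]{GN} and $(2)\Leftrightarrow(3)$ to \cite[Lemma~2.15]{IW1}. Your preparatory material and the short implications $(2)\Rightarrow(4)$, $(3)\Rightarrow(1)$ are correct, so the only issue is the step $(1)\Rightarrow(2)$, which you flag as the main obstacle.

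Both of your proposed routes for $(1)\Rightarrow(2)$ in fact yield $(1)\Rightarrow(3)$, not $(1)\Rightarrow(2)$. The duality $\RHom_A(-,\omega_A)\cong\RHom_R(-,\omega_R)$ is a functor $D^b(\mod A)\to D^b(\mod A^{\op})$: it exchanges $\per A$ with $\thick_{A^{\op}}(\omega_A)$ and $\thick_A(\omega_A)$ with $\per A^{\op}$. Now $\injdim_AA=d$ is equivalent, via the depth argument you sketch, to $\Ext^{>0}_A(Y,A)=0$ for all $Y\in\CM A$, i.e.\ $A$ is injective in the exact category $\CM A$. Since $\CM A$ has enough injectives and these are precisely $\add_A\omega_A$, one gets $A\in\add_A\omega_A\subset\thick_A(\omega_A)$. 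The duality then says $\omega_A$ is perfect over $A^{\op}$, and Auslander--Buchsbaum gives $\omega_A\in\proj A^{\op}$: this is condition $(3)$, with the side swapped from what you want. Your syzygy alternative has the same problem: from $(1)$ one does obtain $\Ext^{>0}_A(\omega_A,A)=0$, but together with CM syzygies this only makes $\omega_A$ behave like a Gorenstein-projective left $A$-module; it does not force $\pd_A\omega_A<\infty$.

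With $(1)\Rightarrow(3)$ in hand (and its mirror $(4)\Rightarrow(2)$), your cycle collapses into two disconnected equivalences $(1)\Leftrightarrow(3)$ and $(2)\Leftrightarrow(4)$. The bridge between them --- that $\omega_A\in\proj A$ if and only if $\omega_A\in\proj A^{\op}$, equivalently $\add_AA=\add_A\omega_A$ --- is exactly what \cite[Lemma~2.15]{IW1} supplies, and it needs a genuinely new idea (for instance, a Krull--Schmidt count matching indecomposable summands of $A$ and of $\omega_A$ via $(-)^\vee$ after passing to the completion). Without this left--right passage your argument is incomplete.
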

\begin{proof}
(1)$\Leftrightarrow$(2), and (3)$\Leftrightarrow$(4), is \cite[Proposition 1.1(3)]{GN}, whereas (2)$\Leftrightarrow$(3) is \cite[Lemma 2.15]{IW1}.
\end{proof}

Leading up to our next result, we require the following well known property of the Nakayama functor.

\begin{lemma}\label{AR type}
Suppose that $R$ is a normal CM domain with a canonical module $\omega_R$. Then there are functorial isomorphisms
\[
\Hom_R(Y,\upnu(X))\cong\Hom_R(X,Y)^{\vee}\cong\Hom_R(\upnu^{-1}(Y),X)
\]
for all $X,Y\in\refl R$.
\end{lemma}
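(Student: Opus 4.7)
My plan is to establish the first isomorphism $\Hom_R(Y,\upnu(X)) \cong \Hom_R(X,Y)^{\vee}$ directly, and then to deduce the second from it by substituting arguments and dualizing. Throughout I rely on two standard inputs: $\Hom_R(-,\omega_R)$ lands in the reflexive category over a CM normal domain (a depth-lemma argument applied to a free presentation of the argument dualized into $\omega_R$), and $\upnu$ restricts to an autoequivalence of $\refl R$ with inverse $\Hom_R(\omega_R,-)$. Combined with the principle that a natural morphism between reflexive modules which is an isomorphism in codimension $1$ is already an isomorphism, these reduce the claims to computations at height-$1$ primes where $R_\pp$ is a DVR and everything becomes free.

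For the first isomorphism, I would first use the identity $\upnu(X) = \Hom_R(X^*,\omega_R)$ (which follows from comparing the two descriptions of $\upnu$ at height-$1$ primes where both sides are reflexive) together with tensor-hom adjunction to write
\[
\Hom_R(Y,\upnu(X)) = \Hom_R(Y,\Hom_R(X^*,\omega_R)) \cong \Hom_R(X^*\otimes_R Y,\omega_R) = (X^*\otimes_R Y)^{\vee}.
\]
I would then consider the natural evaluation morphism $\eta\colon X^*\otimes_R Y \to \Hom_R(X,Y)$ sending $\varphi\otimes y \mapsto (x\mapsto \varphi(x)y)$, and dualize into $\omega_R$ to obtain a natural map $\eta^{\vee}\colon \Hom_R(X,Y)^{\vee}\to (X^*\otimes_R Y)^{\vee}$. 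Both source and target of $\eta^{\vee}$ are $\omega_R$-duals, hence reflexive, so it suffices to check $\eta^{\vee}_\pp$ is an isomorphism at every height-$1$ prime $\pp$. There $R_\pp$ is a DVR, $X_\pp$ and $Y_\pp$ are free, and $\eta_\pp$ is the classical isomorphism between free modules, so $\eta^{\vee}_\pp$ is an isomorphism as well. Composing yields the first asserted isomorphism.

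For the second isomorphism, I would substitute $(\upnu^{-1}(Y),X)$ for $(X,Y)$ in the first isomorphism, giving
\[
\Hom_R(X,Y) = \Hom_R(X,\upnu(\upnu^{-1}(Y))) \cong \Hom_R(\upnu^{-1}(Y),X)^{\vee}.
\]
In particular this expresses $\Hom_R(X,Y)$ as an $\omega_R$-dual, so it is reflexive; the same argument applied to $(\upnu^{-1}(X),\upnu^{-1}(Y))$ in place of $(X,Y)$ shows that $\Hom_R(\upnu^{-1}(Y),X)$ is also reflexive. Dualizing the displayed isomorphism into $\omega_R$ and invoking the reflexivity of $\Hom_R(\upnu^{-1}(Y),X)$ to kill the resulting double dual produces the desired $\Hom_R(X,Y)^{\vee} \cong \Hom_R(\upnu^{-1}(Y),X)$. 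The main subtlety is checking that $\upnu^{-1}$ really is an autoequivalence of $\refl R$ rather than of some larger torsion-free category; this is a standard verification using that $\omega_R$ is invertible in codimension $1$, so the unit and counit of the adjunction between $\upnu$ and $\Hom_R(\omega_R,-)$ are isomorphisms on $\refl R$.
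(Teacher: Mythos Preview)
Your proof is correct and matches the paper's approach: both obtain the first isomorphism via tensor--hom adjunction to reach $(X^*\otimes_R Y)^{\vee}$ and then dualize the evaluation map $X^*\otimes_R Y\to\Hom_R(X,Y)$, with your version spelling out the codimension-one check that the paper leaves implicit. For the second isomorphism the paper takes a shorter route than your substitute-then-dualize argument: since $\upnu$ is an autoequivalence of $\refl R$, one has directly $\Hom_R(Y,\upnu(X))\cong\Hom_R(\upnu^{-1}(Y),\upnu^{-1}\upnu(X))=\Hom_R(\upnu^{-1}(Y),X)$, which avoids the extra $\omega_R$-reflexivity verification.
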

\begin{proof}
The first isomorphism follows from
\[
\Hom_R(Y,\upnu(X))\cong(X^*\otimes_RY)^{\vee}\cong\Hom_R(X,Y)^{\vee}.
\]
Since $\nu$ is an autoequivalence of $\refl R$, we have $\Hom_R(Y,\upnu(X))\cong\Hom_R(\upnu^{-1}(Y),X)$.
\end{proof}

\begin{proposition}\label{prop 0 text}
Let $R$ be a CM normal domain with a canonical module $\omega_R$.
Assume that $M\in\refl R$ gives a modification of $R$. Then $M$ gives a GM of $R$ if and only if $\add M=\add \upnu(M)$ holds. In this case, $\upnu$ gives an autoequivalence on $\add M$.
\end{proposition}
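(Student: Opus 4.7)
The plan is to characterise the GM condition via Lemma \ref{Goren char}, which—given that $A \colonequals \End_R(M) \in \CM R$ by hypothesis—reduces it to the projectivity of $A^{\vee} \colonequals \Hom_R(A, \omega_R)$ as a left $A$-module, and then translate this condition into a statement about $\upnu(M)$ using Lemma \ref{AR type} and Proposition \ref{ref eq}.

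First, Lemma \ref{AR type} gives a functorial isomorphism $\Hom_R(M, \upnu(M)) \cong \Hom_R(M,M)^{\vee} = A^{\vee}$ of left $A$-modules. Since the equivalence $\Hom_R(M, -) \colon \refl R \to \refl A$ of Proposition \ref{ref eq} restricts to an equivalence $\add M \simeq \proj A$ (sending $M \mapsto A$), one obtains the chain of equivalences: $M$ gives a GM iff $A^{\vee} \in \proj A$ iff $\Hom_R(M, \upnu(M)) \in \proj A$ iff $\upnu(M) \in \add M$. This already yields the inclusion $\add \upnu(M) \subseteq \add M$ toward the desired equality.

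For the reverse inclusion $\add M \subseteq \add \upnu(M)$, I would exploit the left/right symmetry in Lemma \ref{Goren char}: $M$ gives a GM iff $A^{\op} \cong \End_R(M^*)$ does, i.e., iff $M^*$ gives a GM. Applying the already-established first part to $M^*$ yields $\upnu(M^*) \in \add M^*$. Using $M^{**} = M$, one computes $\upnu(M^*) = (M^{**})^{\vee} = M^{\vee}$, so $M^{\vee} = X^*$ for some $X \in \add M$; dualising via $(-)^*$ and using reflexivity then gives $\upnu^{-1}(M) = (M^{\vee})^* = X \in \add M$, which is equivalent (by applying the autoequivalence $\upnu$) to $M \in \add \upnu(M)$. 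Combining $\upnu(M), \upnu^{-1}(M) \in \add M$ with $\upnu$ being an autoequivalence of $\refl R$ then forces $\upnu(\add M) = \add M$, so $\upnu$ restricts to an autoequivalence of $\add M$, and the converse implication of the iff is immediate from the chain of equivalences above. The only subtle points—rather than genuine obstacles—are verifying that the isomorphism $\Hom_R(M, \upnu(M)) \cong A^{\vee}$ respects the left $A$-module structures (which follows from the functoriality of $\upnu$ in Lemma \ref{AR type}), and tracking the duality $(-)^*$ carefully when transferring GM information from $M$ to $M^*$.
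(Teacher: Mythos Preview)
Your proof is correct and follows essentially the same route as the paper's: both identify $\Lambda^\vee\cong\Hom_R(M,\upnu(M))$ via Lemma~\ref{AR type} and then transport the GM condition through the equivalence $\Hom_R(M,-)\colon\refl R\to\refl\Lambda$ of Proposition~\ref{ref eq}. The only minor difference is that the paper invokes \cite[Lemma~2.15]{IW1} to obtain the two-sided equality $\add_\Lambda\Lambda=\add_\Lambda\Lambda^\vee$ in one stroke, whereas you derive the two inclusions separately by exploiting the symmetry (1)$\Leftrightarrow$(4) of Lemma~\ref{Goren char} and passing through $M^*$---a slightly more self-contained variant of the same argument.
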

\begin{proof}
By our assumption, $\Lambda\colonequals \End_R(M)$ is a CM $R$-module. By Proposition \ref{ref eq}, there is an equivalence $F\colonequals \Hom_R(M,-)\colon\refl R\to\refl\Lambda$.
By Lemma~\ref{AR type}, $\Lambda^\vee=\End_R(M)^\vee=\Hom_R(M,\upnu(M))=F(\upnu(M))$ holds.  By \cite[Lemma 2.15]{IW1}  and as in Lemma \ref{Goren char}, $\Lambda$ is a Gorenstein modification of $R$ if and only if $\add_\Lambda\Lambda=\add_\Lambda(\Lambda^\vee)$ if and only if $\add_\Lambda F(M)=\add_\Lambda F(\upnu(M))$ if and only if $\add_RM=\add_R\upnu(M)$.
In this case, the autoequivalence $\upnu\colon\refl R\to\refl R$ restricts to an autoequivalence $\add M\to\add M$.
\end{proof}

\begin{theorem}\label{prop 1}
Let $R$ be a CM normal domain with a canonical module $\omega_R$.
\begin{enumerate}
\item\label{prop 1 1} If $R$ is $\Q$-CY and its canonical cover $S$ is Gorenstein, then $S$ gives a GM of $R$. 
More generally, assume that $R$ is $\Q$-Gorenstein of index $n$, and for any affine open subset $\Spec U$ of $\Spec R$ which is $\Q$-CY, a canonical cover of $U$ is Gorenstein. Then $\bigoplus_{i=0}^{n-1}\omega_R^i$ gives a GM of $R$.
\item\label{prop 1 2} If $M\in\refl R$ gives a GM of $R$, then $R$ is $\Q$-Gorenstein. Moreover, if $\rk M$ is invertible in $R$, then for any affine open subset $\Spec U$ of $\Spec R$ which is $\Q$-CY, any canonical cover of $U$ is Gorenstein.
\end{enumerate}
\end{theorem}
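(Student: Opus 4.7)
The plan for part (1) is a direct verification. In the local case, assume $R$ is $\Q$-CY of index $n$ with Gorenstein canonical cover $S$. Proposition~\ref{gorensteiness}\eqref{gorensteiness 1} gives $S\in\CM R$, Proposition~\ref{construct S}\eqref{construct S 2} identifies $\End_R(S)\cong\NCov{S}$, and Proposition~\ref{global 1}\eqref{global 1 3} shows this is a free $S$-module of rank $n$ and so lies in $\CM R$. The bound $\injdim_{\sNCov{S}}\NCov{S}\le\injdim_S S=\dim R$ is Proposition~\ref{global 1}\eqref{global 1 2}, and the reverse inequality is the remarked \cite[Cor.~3.5(4)]{GN2}, so $S$ gives a GM of $R$. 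For the general statement, localise at each $\pp\in\Spec R$: any $\pp$ lies in a $\Q$-CY affine open on which the canonical cover is Gorenstein by hypothesis, so $R_\pp$ is $\Q$-CY of some index $n_\pp\mid n$ with Gorenstein canonical cover $S_\pp$. Since $M_\pp\cong S_\pp^{\oplus n/n_\pp}$, the endomorphism ring $\End_{R_\pp}(M_\pp)$ is Morita equivalent to $\End_{R_\pp}(S_\pp)$ and the local case applies.

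For part (2), first show $R$ is $\Q$-Gorenstein. Proposition~\ref{prop 0 text} gives that $\upnu$ restricts to an autoequivalence of $\add M$, so it permutes the isomorphism classes of the indecomposable summands of $M$. Letting $N$ be the direct sum of one copy of each such summand, $\upnu(N)\cong N$, so Proposition~\ref{det}\eqref{det 1} yields $[\det N]=[\det\upnu(N)]=[\det N]+(\rk N)[\omega_R]$ in $\Cl(R)$. Hence $(\rk N)[\omega_R]=0$, so $\omega_R^{\rk N}\cong R$ and $R$ is $\Q$-CY (in particular $\Q$-Gorenstein).

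For the moreover clause, restrict to a $\Q$-CY affine open $\Spec U$, so $U$ is $\Q$-CY of some index $n$ and its canonical cover is $S=\bigoplus_{i=0}^{n-1}\omega_U^i$. By Proposition~\ref{gorensteiness}\eqref{gorensteiness 1} it suffices to show each $\omega_U^i\in\CM U$. The key device is the reduced trace $\operatorname{tr}\colon\End_U(M)\to U$: since $M$ is reflexive over the normal domain $U$ it is free at every height one prime, so the generic matrix trace takes values in $\bigcap_{\hei\pp=1}U_\pp=U$. Because $\operatorname{tr}(u\cdot\operatorname{id}_M)=(\rk M)u$ and $\rk M\in U^\times$, the scalar inclusion $U\hookrightarrow\End_U(M)$ is split, exhibiting $U$ as a $U$-module summand of $\End_U(M)$. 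Tensoring by $\omega_U^i$ and taking reflexive hulls (which preserve direct summands) then shows that $\omega_U^i$ is a summand of $(\omega_U^i\otimes_U\End_U(M))^{**}\cong\Hom_U(M,\upnu^i(M))$, using the standard identifications $(M^*\otimes M)^{**}\cong\End_U(M)$, $(M^*\otimes N)^{**}\cong\Hom_U(M,N)$ for reflexive $M,N$, and $(\omega_U^i\otimes M)^{**}=\upnu^i(M)$. Iterating $\upnu(\add M)=\add M$ gives $\upnu^i(M)\in\add M$, so $\Hom_U(M,\upnu^i(M))\in\add\End_U(M)\subset\CM U$, whence $\omega_U^i\in\CM U$ as required. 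The most delicate step to confirm is the setup of the reduced trace and the reflexive-hull manipulations, both of which are standard under the reflexive/normal hypotheses.
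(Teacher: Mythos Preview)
Your arguments for part~(1) and for the moreover clause of part~(2) are correct and close to the paper's. In~(1) the paper instead observes that $\upnu(S)\cong S$ as $R$-modules and invokes Proposition~\ref{prop 0 text} directly, whereas you bound the injective dimension via Proposition~\ref{global 1}\eqref{global 1 2}; both routes work. For the moreover clause the paper cites \cite[Proposition~A.2, Corollary~A.5]{HL} for the splitting $U\hookrightarrow\End_U(M)$, while your explicit trace does the same job.

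There is, however, a genuine gap in your proof that $R$ is $\Q$-Gorenstein. You assert that $\upnu$ ``permutes the isomorphism classes of the indecomposable summands of $M$'' and then take $N$ to be the direct sum of one representative of each. This presupposes the Krull--Schmidt property for $\add M$, which is not available over an arbitrary CM normal domain: without it the set of isomorphism classes of indecomposable objects in $\add M$ need not be finite (for instance, over a Dedekind domain with infinite class group, $\add(R\oplus I)$ contains every ideal class), so your $N$ may fail to exist, and even when $M$ has some indecomposable decomposition $M=\bigoplus M_i$ there is no reason each $\upnu(M_i)$ should be isomorphic to an $M_j$.

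The paper repairs this exactly where you would expect: since $\Q$-Gorenstein is a local condition and $\Cl(R_\mm)$ injects into $\Cl(\widehat{R_\mm})$, one first replaces $R$ by the completion $\widehat{R_\mm}$ at each maximal ideal. Over a complete local ring Krull--Schmidt--Azumaya holds, so for the basic module $M$ one gets the honest isomorphism $M\cong\upnu(M)$, and then your determinant computation $[\det M]=[\det M]+(\rk M)[\omega_R]$ goes through verbatim to yield $(\rk M)[\omega_R]=0$. With that reduction inserted, your argument is complete.
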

\begin{proof}
(1) We have only to show the first statement since giving a GM is a local property.
Assume that $R$ is $\Q$-CY and $S=\bigoplus_{i=0}^{n-1}\omega_R^i$ is a canonical cover of $R$.
Then $\End_R(S)\cong\NCov{S}$ is a CM $R$-module.
Since $\upnu(S)\cong S$ as $R$-modules, it follows from Proposition \ref{prop 0 text} that $S$ gives a GM of $R$.\\
(2) For the first statement, since projective is a local property, it follows that $\Q$-Gorenstein is a local property, and so it suffices to show that $R_\mm$ is $\Q$-CY for each maximal ideal $\mm$ of $R$. Since the class group of $R_\mm$ is a subgroup of the class group of its completion $\widehat{R}$, it suffices to show that $\widehat{R}$ is $\Q$-CY. Since GM is preserved by localization and completion, we can assume that $R$ is complete local without loss of generality.

Assume that a basic $R$-module $M$ gives a GM of $R$. By Proposition \ref{prop 0 text} and the Krull--Schmidt--Azumaya Theorem, we have
\[
M\cong\upnu(M)=(\omega_R\otimes_RM)^{**}.
\]
By Proposition \ref{det}\eqref{det 1}, this gives an equality $[\det M]=[\det M]+(\rk M)[\omega_R]$ in the class group of $R$.
Hence $[\omega_R]$ is torsion in the class group, so $R$ is $\mathds{Q}$-CY.

Now we prove the second statement. By Proposition \ref{gorensteiness}\eqref{gorensteiness 1}, it suffices to show that $\omega_U^i$ is a CM $U$-module for any $i\in\Z$.
By our assumption $(\rk M)^{-1}\in R$ and \cite[Proposition A.2, Corollary A.5]{HL}, $U$ is a direct summand of the $U$-module $\End_U(M)$.
Thus $\omega_U^i$ is a direct summand of the $U$-module $(\End_U(M)\otimes_U\omega_U^i)^{**}$.
Since $(\omega_U^i\otimes_UM)^{**}\in\add M$ holds by Proposition \ref{prop 0 text}, 
\[
(\End_U(M)\otimes_U\omega_U^i)^{**}\cong\Hom_U(M,(\omega_U^i\otimes_UM)^{**})\in\add_U\End_U(M)\subset\CM U.
\]
Consequently, $\omega_U^i$ is a CM $U$-module.
\end{proof}

Note that the first statement in Theorem \ref{prop 1}\eqref{prop 1 2} can also be deduced from the following.

\begin{lemma}\label{3.3}
Let $R$ be a CM normal domain with a canonical module $\omega_R$. Let $M$ be a torsion-free $R$-module and $A=\End_R(M)$. If $A \cong \Hom_R(A,\omega_R)$ as $R$-modules, then $R$ is  $\Q$-CY.
\end{lemma}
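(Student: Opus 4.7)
The plan is short: take determinants of both sides of the hypothesized isomorphism and invoke Proposition \ref{det}.

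First I would verify that $A=\End_R(M)$ is torsion-free over $R$, which is standard: if $r\varphi=0$ for some $0\neq r\in R$ and $\varphi\in A$, then $r\varphi(m)=0$ for every $m\in M$, and torsion-freeness of $M$ forces $\varphi=0$. Writing $n\colonequals \rk M$ (which we may assume positive, else the statement is vacuous), we have $\rk A=n^2$ and $\rk\omega_R=1$. Proposition \ref{det}(\ref{det 3}) then gives $[\det A]=0$, while Proposition \ref{det}(\ref{det 2}), applied with the torsion-free module $A$ in the first slot and $\omega_R$ in the second, yields
\[
[\det\Hom_R(A,\omega_R)]=(\rk A)[\omega_R]-(\rk\omega_R)[\det A]=n^2[\omega_R].
\]

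The given $R$-module isomorphism $A\cong\Hom_R(A,\omega_R)$ forces the two determinant classes to coincide in $\Cl(R)$, so $n^2[\omega_R]=0$. Hence $[\omega_R]$ is torsion in the class group, which is exactly the statement that $\omega_R^{n^2}\cong R$ as divisorial ideals, i.e., $R$ is $\Q$-CY. Once Proposition \ref{det} is in hand there is essentially no obstacle: the only point to watch is that $A$ sits in the torsion-free/positive-rank regime required to apply parts (\ref{det 2}) and (\ref{det 3}), which is immediate from $M$ being torsion-free.
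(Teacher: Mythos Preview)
Your proof is correct and follows essentially the same approach as the paper: both apply Proposition~\ref{det}\eqref{det 3} to get $[\det A]=0$, then Proposition~\ref{det}\eqref{det 2} to compute $[\det\Hom_R(A,\omega_R)]=(\rk A)[\omega_R]$, and conclude from the isomorphism that $[\omega_R]$ is torsion. Your added verification that $A$ is torsion-free (needed for part~\eqref{det 2}) and the explicit computation $\rk A=n^2$ are minor elaborations rather than substantive differences.
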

\begin{proof}
By Proposition \ref{det}\eqref{det 3}, $[\det A]=0$. Again by Proposition \ref{det}\eqref{det 2},
\[
0=[\det A]=[\det\Hom_R(A,\omega_R)]=(\rk A)[\omega_R] - [\det A]=(\rk A)[\omega_R].\qedhere
\]
\end{proof}

We have the following explicit correspondences between Gorenstein modifications of $R$ and Gorenstein modifications of $S$.

\begin{theorem}\label{main}
Let $R$ be a CM normal domain with a canonical module $\omega_R$ which is $\Q$-CY of index $n$.  Let $S$ be a canonical cover of $R$.
\begin{enumerate}
\item\label{main 1} If $N\in\refl S$ gives a GM (respectively, NCCR, NCR) of $S$ and is $\Z_n$-gradable, then it gives a GM (respectively, NCCR, NCR) of $R$.
\item\label{main 2} If $M\in\refl R$ gives a GM (respectively, NCCR) of $R$ and $R$ contains $n^{-1}$, then $(S\otimes_RM)^{**}$ gives a GM (respectively, NCCR) of $S$.
\item\label{main 3} If $R$ contains $n^{-1}$, then there are bijections
\[
\GM R\simeq\GM_{\Zn}\!S\ \mbox{ and }\ \NCCR R\simeq\NCCR_{\Zn}\!S.
\]
\end{enumerate}
\end{theorem}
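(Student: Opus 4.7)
Plan. The two bijections are realized in parallel by the mutually inverse maps
\begin{align*}
\Phi\colon \GM_{\Zn}\!S &\to \GM R, & [N] &\longmapsto [N|_R],\\
\Psi\colon \GM R &\to \GM_{\Zn}\!S, & [M] &\longmapsto [(S\otimes_R M)^{**}],
\end{align*}
and the very same formulas, restricted to NCCR classes, yield the second bijection. Well-definedness of $\Phi$ is exactly part (1) of the theorem (noting that $(S\otimes_R M)^{**}$ is $\Zn$-gradable via the grading on $S$), while well-definedness of $\Psi$ is part (2), whose proof uses the hypothesis $n^{-1}\in R$. It remains to verify that $\Phi\circ\Psi$ and $\Psi\circ\Phi$ are identities.

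For $\Phi\circ\Psi=\mathrm{id}$: given $M\in\refl R$ giving a GM (respectively NCCR) of $R$, the $R$-module underlying $(S\otimes_R M)^{**}$ decomposes as $\bigoplus_{i=0}^{n-1}(\omega_R^i\otimes_R M)^{**}=\bigoplus_{i=0}^{n-1}\upnu^i(M)$. By Proposition \ref{prop 0 text} the Nakayama functor $\upnu$ restricts to an autoequivalence of $\add M$, so each $\upnu^i(M)\in\add M$; together with the fact that $M=\upnu^0(M)$ itself appears as the $i=0$ summand, this yields $\add_R(S\otimes_R M)^{**}=\add_R M$, that is $\Phi(\Psi([M]))=[M]$.

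For $\Psi\circ\Phi=\mathrm{id}$: given $N=\bigoplus_{j\in\Zn}N_j\in\refl^{\Zn}\!S$, distributing the reflexivisation over the finite direct sum gives
\[
(S\otimes_R N)^{**}\cong\bigoplus_{j\in\Zn}(S\otimes_R N_j)^{**}.
\]
Each summand $(S\otimes_R N_j)^{**}$ is a $\Zn$-graded reflexive $S$-module with degree $0$ component $(R\otimes_R N_j)^{**}=N_j$, and the shift $N(j)$ also has degree $0$ component $N_j$. Since $(-)_0\colon\refl^{\Zn}\!S\simeq\refl R$ is an equivalence by Proposition \ref{E and Sn}(2), this forces $(S\otimes_R N_j)^{**}\cong N(j)$ in $\refl^{\Zn}\!S$, so
\[
(S\otimes_R N)^{**}\cong\bigoplus_{j\in\Zn}N(j)\cong N^n
\]
as (ungraded) $S$-modules. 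Thus $\add_S(S\otimes_R N)^{**}=\add_S N$, giving $\Psi(\Phi([N]))=[N]$.

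I expect the main obstacle to be the identification $(S\otimes_R N_j)^{**}\cong N(j)$: one must verify that the inverse of the equivalence $(-)_0$ genuinely sends $M\in\refl R$ to $(S\otimes_R M)^{**}$. Via the strongly graded property of $S$ from Proposition \ref{construct S}(1), this reduces to checking that the multiplication map $S_i\otimes_R N_j\to N_{i+j}$ becomes an isomorphism after reflexivisation, a codimension-one verification using that each $\omega_R^i$ is locally free in height one.
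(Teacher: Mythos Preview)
Your proof is correct and follows the paper's approach: the paper's proof of part~(3) is the single sentence ``immediate from \eqref{main 1} and \eqref{main 2}'', and your argument makes this explicit by exhibiting the mutually inverse maps and verifying both composites. Your computation of $\Phi\circ\Psi$ via Proposition~\ref{prop 0 text} is exactly the step contained in the paper's proof of~\eqref{main 2}, while your verification of $\Psi\circ\Phi$ via Proposition~\ref{E and Sn}\eqref{E and Sn 1} supplies the detail the paper leaves implicit; note that since $S$ is strongly $\Zn$-graded, the multiplication map $S_i\otimes_RN_j\to N_{i+j}$ is already an isomorphism (Dade), so the reflexive hull is redundant and your anticipated obstacle dissolves.
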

\begin{proof}
(1) This is shown in Theorem \ref{from S to R}\eqref{from S to R 2}. \\
(2) Assume that $M\in\refl R$ gives a GM (respectively, NCCR) of $R$. Let $N\colonequals (S\otimes_RM)^{**}\in\refl^{\Zn}\!S$.
Then $\add_RM=\add_RN$ holds by Proposition \ref{prop 0 text}, and thus $\End_R(M)$ is Morita equivalent to $\End_R(N)$. Therefore $N$ gives a GM (respectively, NCCR) of $R$. Since $R$ contains $n^{-1}$, $N$ gives a GM (respectively, NCCR) of $S$ by Theorem \ref{from S to R}\eqref{from S to R 2}.\\
(3) The assertion is immediate from \eqref{main 1} and \eqref{main 2}.
\end{proof}

We can not add `NCR' to Theorem \ref{main}\eqref{main 2} by the following known observation.

\begin{remark}
Let $R$ be a local rational surface singularity containing an algebraically closed residue field of characteristic zero. 
\begin{enumerate}
\item $R$ has an NCR and is $\Q$-Gorenstein.
\item  If $R$ is not a quotient singularity by a finite group, then no canonical cover $S$ of $R$ is a rational singularity, or
has an NCR. More strongly, there is no faithful $S$-module $M$ satisfying $\gl\End_S(M)<\infty$.
\end{enumerate}
\end{remark}

\begin{proof}
(1) $R$ has an NCR by \cite[Corollary 2.15]{IWclass}. Moreover $R$ is $\Q$-Gorenstein since the class group is finite \cite[Proposition 17.1]{L}.\\
(2) Let $S$ be a canonical cover of $R$ of index $n$, and $\zeta\in R$ the primitive $n$-th root of unity.  Note that $S\in \CM R$ since $\dim R=2$ so reflexive equals CM, thus $S$ is a Gorenstein ring by Proposition \ref{gorensteiness}\eqref{gorensteiness 1}. Then $\Z_n=\langle\zeta\rangle$ acts on $S=\bigoplus_{i\in\Z_n}S_i$ by $\zeta\cdot(x_i)_{i\in\Z_n}:=(\zeta^ix_i)_{i\in\Z_n}$, and the invariant subring is $R$. If $S$ is a rational singularity, then it is a quotient singularity and thus so is $R$, a contradiction. Thus $S$ is Gorenstein but not a quotient singularity, and so $S$ does not have an NCCR, which is precisely an NCR since $S$ is Gorenstein and $\dim S=2$.
The last statement follows from \cite[Corollary 3.3]{DITV}.
\end{proof}

Theorem~\ref{intro equiv} from the introduction is a special case ($N\colonequals S$) of the following result,
where for an $R$-algebra $A$, we denote by $\CM A$ the category of $A$-modules which are Cohen-Macaulay as an $R$-module.

\begin{theorem}\label{main equivalence text}
Let $R$ be a CM local normal domain with a canonical module $\omega_R$ which is $\Q$-Gorenstein of index $n$. Let $S$ be a canonical cover of $R$.
If $N\in\refl^{\Zn}\!S$ gives a GM of $S$, then there are equivalences
\[
\CM^{\Zn}\!\End_S(N)\simeq\CM\End_R(N)\simeq\ZZ(R,N)\colonequals \{X\in\refl R\mid\Hom_R(N,X)\in\CM R\}.
\]
\end{theorem}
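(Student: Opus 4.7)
I split the claim into the two separate equivalences
\[
\CM^{\Zn}\!\End_S(N)\simeq \CM\End_R(N)\simeq \ZZ(R,N),
\]
tackling the first by the Gabriel cover machinery of Section \ref{Gabriel} and the second by restricting the reflexive Morita-type equivalence of Proposition \ref{ref eq}.

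For the first equivalence, I invoke Proposition \ref{E and Sn}\eqref{E and Sn 2}: it supplies a ring isomorphism $\End_R(N)\cong \NCov{\End_S(N)}$ together with the push-down equivalence $\Mod\End_R(N)\simeq \Mod^{\Zn}\!\End_S(N)$ provided by Proposition \ref{global 1}\eqref{global 1 1}. Under this correspondence an $\End_R(N)$-module $X$ decomposes by the matrix idempotents $e_g$ as $X=\bigoplus_{g\in\Zn}e_gX$, so its underlying $R$-module is literally unchanged. Since Cohen-Macaulayness on both sides is purely a condition on the underlying $R$-module (with Proposition \ref{E and Sn}\eqref{E and Sn 0} comparing $\CM S$ and $\CM R$ if needed), the equivalence restricts as claimed.

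For the second equivalence I first verify the hypotheses. Since $N$ gives a GM of $S$, in particular it gives a modification of $S$, and Theorem \ref{from S to R}\eqref{from S to R 1} then yields $\End_R(N)\in\CM R$. Meanwhile $N\in\refl^{\Zn}\!S$ implies $N\in\refl S$, and since $S$ is a symmetric $R$-algebra (Lemma \ref{End is G-covering}\eqref{End is G-covering 4}, using that the canonical cover is strongly $\Zn$-graded), Lemma \ref{S R reflexive}\eqref{S R reflexive 2} gives $N\in\refl R$. Proposition \ref{ref eq} now supplies the equivalence $F\colonequals \Hom_R(N,-)\colon \refl R\to \refl\End_R(N)$. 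By definition $X\in\ZZ(R,N)$ iff $F(X)\in \CM R$, so $F$ restricts to an equivalence between $\ZZ(R,N)$ and $\CM R\cap \refl\End_R(N)$. Applying Lemma \ref{S R reflexive}\eqref{S R reflexive 2} once more (now to the symmetric $R$-algebra $\End_R(N)$, by Proposition \ref{ref eq}) identifies $\refl\End_R(N)$ with $\End_R(N)$-modules reflexive over $R$, so the remaining task reduces to the inclusion $\CM\End_R(N)\subset \refl \End_R(N)$. To prove it, take $X\in\CM\End_R(N)$: any $\pp\in\Ass X$ forces $0=\depth_{R_\pp}X_\pp=\dim R_\pp$, so $\pp=(0)$ and $X$ is torsion-free; the depth equality on $\Supp X$ also gives Serre's $S_2$ condition; Serre's criterion on the normal domain $R$ then delivers $X\in\refl R$.

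The main obstacle is this last inclusion $\CM\End_R(N)\subset \refl\End_R(N)$, since everything else is a direct citation of the covering apparatus and Proposition \ref{ref eq}; fortunately it is a clean application of Serre's criterion once torsion-freeness has been extracted from the depth condition on associated primes, after which the two equivalences assemble into the asserted chain.
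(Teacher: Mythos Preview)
Your argument is correct and follows precisely the paper's approach: the first equivalence is Proposition \ref{E and Sn}\eqref{E and Sn 2} restricted to CM objects, and the second is the restriction of the equivalence $\Hom_R(N,-)\colon\refl R\to\refl\End_R(N)$ from Proposition \ref{ref eq}. You have simply filled in details the paper leaves implicit, in particular the verification that $\CM\End_R(N)\subset\refl\End_R(N)$ via Serre's criterion, which the paper regards as part of the ``obvious restriction''; note also that the GM hypothesis on $N$ is not actually invoked in either your argument or the paper's proof of the equivalences themselves.
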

\begin{proof}
The first equivalence follows from Proposition \ref{E and Sn}\eqref{E and Sn 2}, and the second is the obvious restriction of the equivalence $\Hom_R(N,-)\colon\refl R\to\refl\End_R(N)$ in Proposition \ref{ref eq}.
\end{proof}

\begin{example}
Let $R=k[x,y,z]^{(6)}$ be the 6th Veronese subring of $k[x,y,z]$.
Then $S=k[x,y,z]^{(3)}$ and there is an equivalence
\[
\CM^{\Z_2}\!S\simeq\{X\in\CM R\mid(\omega_R\otimes_RX)^{**}\in\CM R\}.
\]
Both $R$ and $S$ have infinite CM type.
\end{example}

%\subsection{Proof of corollaries}

\subsection{Applications}\label{app section}
We now give a proof of the three applications in the introduction. 
Our first application is the following, which complements \cite{SpV}.

\begin{corollary}\label{SVdB question}
Let $G$ be a connected reductive group in characteristic zero, $W$ a $G$-representation. If $G$ acts generically on $X\colonequals \Spec SW$ such that $R\colonequals k[X]^G$ is not Gorenstein, then $R$ is not $\Q$-Gorenstein. Therefore $R$ admits neither a GM nor an NCCR.
\end{corollary}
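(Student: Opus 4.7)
The plan is to split the statement into two: the consequence about GMs/NCCRs, which is immediate, and the core implication that not Gorenstein forces not $\Q$-Gorenstein, which uses \cite{SpV}.

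First I would dispatch the last sentence. By Theorem \ref{prop 1}\eqref{prop 1 2}, if $R$ admits a GM then $R$ is $\Q$-Gorenstein; since an NCCR is in particular a GM, the contrapositive gives both implications at once. So the content is entirely in the first sentence, namely: under the generic action hypothesis on a connected reductive $G$, $R$ not Gorenstein forces $R$ not $\Q$-Gorenstein.

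For that, the plan is to exploit the explicit description of $\Cl(R)$ and $[\omega_R]$ for generic actions of reductive groups supplied by \v{S}penko--Van den Bergh \cite{SpV}. Under their generic-action hypothesis, $R$ is a CM normal domain admitting a canonical module, $\Cl(R)$ embeds into the character group $X^*(G) = \Hom(G,\mathbb{G}_m)$, and the class $[\omega_R]\in\Cl(R)$ is identified with an explicit character of $G$ (essentially the determinant character of the representation on the smooth locus of the generic stratum). Since $G$ is connected reductive, its abelianisation $G/[G,G]$ is a torus, so $X^*(G)\cong X^*(G/[G,G])$ is a free abelian group. Consequently $\Cl(R)$ is torsion-free.

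Now assume for contradiction that $R$ is $\Q$-Gorenstein. Then by definition $[\omega_R]$ is torsion in $\Cl(R)$, and torsion-freeness forces $[\omega_R]=0$. Since $\omega_R$ is reflexive, this means $\omega_R\cong R$, so $R$ is Gorenstein, contradicting the hypothesis. Taking contrapositives yields the first sentence, and the corollary follows.

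The main obstacle is cleanly identifying and citing the precise statements in \cite{SpV} that (i) realise $\Cl(R)$ as a subgroup of $X^*(G)$ under generic action, and (ii) match the class of $\omega_R$ with a character; once these are in hand the argument is purely formal, using only the torsion-freeness of $X^*(G)$ for connected reductive $G$ and the $\Q$-Gorenstein obstruction from Theorem \ref{prop 1}\eqref{prop 1 2}.
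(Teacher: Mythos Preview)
Your strategy is the paper's: exploit torsion-freeness of $X^*(G)$ for connected $G$ to force the canonical class trivial once it is torsion, and deduce the last sentence from Theorem~\ref{prop 1}\eqref{prop 1 2}. Two points need sharpening.

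First, the relevant map runs the other way. What \cite{SpV} actually supplies (their Corollary~4.1.4 and Lemma~4.1.5) is a symmetric monoidal functor $\mod(G,k)\to\refl R$, $V\mapsto(SW\otimes_kV)^G$, which under the generic-action hypothesis \emph{reflects} isomorphisms; on rank-one objects this gives an injection $X^*(G)\hookrightarrow\Cl(R)$, not an embedding of $\Cl(R)$ into $X^*(G)$. The ingredient placing $[\omega_R]$ in the image is not in \cite{SpV} but in Knop \cite{Knop}: $\omega_R\cong(SW\otimes_k\upchi)^G$ for some character $\upchi$. Monoidality then gives $\omega_R^n\cong(SW\otimes_k\upchi^n)^G$, and the torsion-freeness argument applies to $\upchi$ rather than to $[\omega_R]$ directly.

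Second, you pass from ``$R$ is $\Q$-Gorenstein'' to ``$[\omega_R]$ is torsion in $\Cl(R)$'', but the paper's definition only gives $\omega_R^n$ locally free, not $\omega_R^n\cong R$. The paper closes this gap by noting that $R$ is $\Z$-graded with finite-dimensional pieces, so $\mod^{\Z}\!R$ is Krull--Schmidt and a rank-one projective graded module is free; hence $\omega_R^n\cong R$. With that in hand, reflecting isomorphisms yields $\upchi^n$ trivial, and torsion-freeness of $X^*(G)$ for connected $G$ (\cite[Lemma~B, p.~104]{H}) finishes exactly as you outline.
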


\begin{proof}
The canonical module $\omega_R$ is isomorphic to $(SW\otimes_k\upchi)^G$ for some character $\upchi$ \cite[Korollar 2]{Knop}.  Since the action is generic, by \cite[Corollary 4.1.4]{SpV} the natural functor
\begin{equation}
\mod(G,k)\to \refl R\label{nat funct refl}
\end{equation}
is symmetric monoidal, and so $\omega_R^n\cong (SW\otimes_k\upchi^n)^G$. Since  the action contains a fixed point, by \cite[Lemma 4.1.5]{SpV} the functor \eqref{nat funct refl} reflects isomorphisms.

On the other hand, since $R$ is a $\Z$-graded ring such that $\dim_kR_i$ is finite for any $i\in\Z$, the category $\mod^{\Z}\!R$ is Krull-Schmidt. Therefore, if $R$ is $\mathds{Q}$-Gorenstein, then $\omega_R^n$ is projective for some $n>0$, and hence $\omega_R^n\cong R$ holds.  Thus in this case $(SW\otimes_k\upchi^n)^G\cong (SW)^G$ holds, so we deduce that $\upchi^n$ is the trivial representation.

Since $G$ is a connected algebraic group, the character group of $G$ is torsion-free by  \cite[Lemma B, p104]{H}. Thus $\upchi$ is the trivial representation and we have $\omega_R\cong R$, a contradiction.
Consequently, $R$ cannot be $\mathds{Q}$-Gorenstein, and admits neither a GM nor an NCCR by Theorem \ref{prop 1}\eqref{prop 1 2}.
\end{proof}

\begin{remark}
It follows that quotient singularities need not have any NCCRs. A concrete example is the Francia flip, namely the group $\mathbb{C}^*$ acting on the polynomial ring $S=\mathbb{C}[x_1,x_2,y_1,y_2]$ with weights $(2,1,-1,-1)$, and $R\colonequals S^{\mathbb{C}^*}$. There are many other examples, including determinantal varieties from non-square matrices \cite{BLV}.
% To show that $(2,1,-1,-1)$ admits no NCCRs given by a CM module is much easier, since in this case $R=S_0$ has finite CM type.  See for example  \cite[6.3.7]{Quarles}.
\end{remark}

\begin{remark} 
The above also shows that the existence of an NCR does not imply that $R$ is $\Q$-Gorenstein, so that Theorem \ref{prop 1}\eqref{prop 1 2} cannot be generalized.  Indeed, the Francia flip $R$ is CM-finite, and hence the additive generator $M$ of $\CM R$ gives an NCR. Another example of an NCR of $R$ is given by the resolution $X\to\Spec R$, which has a tilting bundle $\mathcal{V}$ \cite{VdB1d} with $\End_X(\mathcal{V})\cong\End_{S_0}(S_0\oplus S_{-1})$. Since $X$ is smooth, this ring has finite global dimension.
\end{remark}

\begin{remark}
We also remark that the existence of a maximal modification algebra (MMA) (see \cite{IW1}) does not imply that $R$ is $\mathds{Q}$-Gorenstein. In fact, the Francia flip $R$ has an MMA since it is CM-finite.
\end{remark}

Before proving our second application, let us first recall the Buchweitz--Greuel--Schreyer classification based on Kn\"{o}rrer's work.

\begin{theorem}\cite{BGS}\label{bgs}
Let $R$ be a complete local Gorenstein ring with algebraically closed residue field of characteristic zero.
Then $R$ is CM-finite if and only if it is a simple singularity.
\end{theorem}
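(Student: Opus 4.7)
The proof naturally splits into two directions, and I would treat them quite asymmetrically.

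For the \emph{if} direction (simple singularity $\Rightarrow$ CM-finite), I would invoke Kn\"orrer's periodicity theorem: for a hypersurface singularity $R=S/(f)$, there is a triangle equivalence $\underline{\CM}(R)\simeq\underline{\CM}(R[[u,v]]/(f+uv))$. Since the simple singularities are stable under adding $u^2+v^2$ up to Kn\"orrer's construction, this reduces the problem to dimensions $0$ and $1$. In those low dimensions the ADE hypersurfaces can be analysed by hand via matrix factorizations, and the finiteness of indecomposable MCMs is verified directly from the known explicit classification on the $A_n,D_n,E_6,E_7,E_8$ curve singularities.

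For the harder \emph{only if} direction, the plan is first to prove that a CM-finite complete local Gorenstein ring with algebraically closed residue field of characteristic zero is necessarily a hypersurface, and then to reduce to the curve case and classify. To establish the hypersurface property, I would use that CM-finiteness combined with the existence of Auslander--Reiten sequences in $\CM R$ forces the syzygy functor $\syz\colon\underline{\CM}(R)\to\underline{\CM}(R)$ to permute the finite set of indecomposables; hence $\syz$ has finite order as a self-equivalence, so every indecomposable non-free MCM admits a periodic free resolution. Eisenbud's characterisation of periodic MCMs then forces $R$ to be a hypersurface. A complementary input here is Auslander's theorem that complete local CM-finite rings are isolated singularities, which ensures that the AR-theory of $\CM R$ applies in the expected form.

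Having reduced to the hypersurface case, I would apply Kn\"orrer periodicity in reverse to peel off squares $u^2+v^2$ until the dimension drops to one (or zero). At that point, the classification of CM-finite complete local hypersurface curves (due to Greuel--Kn\"orrer and Buchweitz--Greuel--Schreyer for the surface/higher-dimensional case) shows that the defining equation lies in the ADE list. Combining this with Kn\"orrer's theorem going back upward identifies all CM-finite Gorenstein singularities with the simple ones.

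The main obstacle is the first step of the \emph{only if} direction: showing that CM-finiteness plus Gorenstein forces the hypersurface property. The combinatorial control of the syzygy action on a finite set of indecomposables is subtle, and in particular one needs careful bookkeeping involving the Auslander--Reiten translate $\tau$ (which on a Gorenstein ring is essentially $\syz^{d-1}$, up to a twist by the canonical module) to deduce finite order of $\syz$ rather than just some weaker stably-periodic statement. Once this step is in place, Kn\"orrer periodicity and the classification of simple curve singularities make the rest of the argument mechanical.
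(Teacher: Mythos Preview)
The paper does not contain a proof of this statement. Theorem~\ref{bgs} is simply quoted from \cite{BGS} (``let us first recall the Buchweitz--Greuel--Schreyer classification based on Kn\"orrer's work'') and then used as a black box in the proof of Corollary~\ref{Auslander-Esnault}. There is no argument, not even a sketch, so there is nothing here against which to compare your proposal.

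Your outline is nonetheless a reasonable road map, closer in spirit to the treatment via Kn\"orrer periodicity and the Greuel--Kn\"orrer curve classification than to the original \cite{BGS}, which leans on deformation theory and modality of hypersurface singularities rather than on reducing to dimension one. One point is imprecise: in the ``only if'' direction you appeal to ``Eisenbud's characterisation of periodic MCMs'' to deduce that $R$ is a hypersurface, but Eisenbud's theorem runs the other way (hypersurface $\Rightarrow$ eventual $2$-periodicity). Getting from ``every indecomposable MCM has a periodic free resolution'' to ``$R$ is a hypersurface'' needs an extra ingredient, for instance Gulliksen's criterion that bounded Betti numbers of the residue field force a complete intersection, together with Herzog's argument that a Gorenstein ring of finite CM type is in fact an abstract hypersurface. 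Apart from that looseness your sketch is standard; it just has no counterpart in the present paper.
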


 The proof of Corollary \ref{Auslander-Esnault} is based on our main Theorem \ref{main NCCR}.

\begin{proof}[Proof of Corollary \ref{Auslander-Esnault}]
Let $M$ be an additive generator of $\CM R$. Then $M$ gives an NCCR of $R$, and therefore $\widehat{M}_{\pp}$ gives an NCCR of $\widehat{R}_{\pp}$ for any $\pp\in\Spec R$. In the rest, we denote $\widehat{R}_{\pp}$ for $\pp\in\Spec R$ with $\dim \widehat{R}_{\pp}=2$ by $R$, and prove that it is a quotient singularity.

Since $R$ is complete local, it admits a canonical module \cite[Corollary 3.3.8]{BH}.
By Theorems~\ref{prop 1}\eqref{prop 1 2} and \ref{main}\eqref{main 2}, it follows that $R$ is $\mathds{Q}$-Gorenstein (and thus $\mathds{Q}$-CY since $R$ is local), and its canonical cover $S$ is Gorenstein and has an NCCR. Therefore $S$ is CM-finite, and further complete local by Proposition \ref{construct S}\eqref{construct S 4}.
By Theorem \ref{bgs}, $S$ must be a simple singularity, and hence a quotient singularity. Since $R$ is the invariant subring of an action of $\Zn$ on $S$, the result follows.
\end{proof}

We finally prove our third application, Corollary \ref{log-terminal}.

\begin{proof}[Proof of Corollary \ref{log-terminal}]
Since $R$ is affine, it admits a canonical module $\omega_R$.  Since $R$ admits an NCCR, by Theorem \ref{prop 1}\eqref{prop 1 2},  $R$ is $\mathds{Q}$-Gorenstein.  Thus there is a finite affine open cover $\Spec R=\bigcup_i\Spec R_i$ such that each $R_i$ is $\mathds{Q}$-CY.  Let $S_i$ be a canonical cover of $R_i$.

Since NCCRs localize, $R_i$ has an NCCR.  By Theorem~\ref{main}, it follows that $S_i$ has a NCCR $\Lambda_i$.  But since $S_i$ is normal by Proposition \ref{gorensteiness}\eqref{gorensteiness 2}, the centre of $\Lambda_i$ is precisely $S_i$.  Since $\Lambda_i$ is thus a finitely generated homologically homogeneous $k$-algebra, $S_i$ has at worst rational singularities by \cite[Theorem 1.1]{SV}.  It follows that $R_i$ is log-terminal \cite[Proposition 1.7]{K}, and since this holds for all $i$,  $R$ is log-terminal.
\end{proof}

\end{document}